\date{first: August 14, 2018; then: August 20, 2019; subm: Dec 26, 2020; latest: August 17, 2021}
\title{\large{\bf Large Deviations for Additive Functionals\\ of Reflected Jump-Diffusions}}
\author{Lea Popovic\thanks{Department of 
Mathematics and Statistics, Concordia University, Montreal, Canada (\tt lea.popovic@concordia.ca)}\and Giovanni Zoroddu 
\thanks{Department of 
Mathematics and Statistics, Concordia University, Montreal, Canada   (\tt giovanni.zoroddu@concordia.ca)}}
\begin{document}
 \maketitle

\newtheorem{remark}[theorem]{Remark}

\def \non{{\nonumber}}
\def \hat{\widehat}
\def \tilde{\widetilde}
\def \bar{\overline}
\def\ep{\epsilon}
\def\N{{\mathbb{N}}}
\def\R{{\mathbb{R}}}
\def\Z{{\mathbb{Z}}}
\def\p{\partial}
\def\ol{\overline}
\def\l{\left}
\def\r{\right}
\def\p{\partial}

\vspace{.5in}

\begin{abstract}

We consider a jump-diffusion process  on a bounded domain with reflection at the boundary, and establish long-term results for a general additive process of its path. This includes the long-term behaviour of its occupation and local times in the interior and on the boundaries. We derive a characterization of the large deviation rate function which quantifies the rate of exponential decay of probabilities of rare events for these additive processes. The characterization relies on a solution of a partial integro-differential equation (PIDE) with boundary constraints. We develop a practical implementation of our results in terms of a numerical solution for the PIDE. We illustrate the method on a few standard examples (reflected Brownian motion, birth-death processes) and on a particular reflected jump-diffusion model arising from applications to biochemical reactions. 

\end{abstract}

\pagestyle{myheadings} \thispagestyle{plain}

\vspace{.2in}

\vspace{.2in}

\section{Introduction}

Many applied stochastic systems in finance, economics, queueing theory, and electrical engineering are modeled by jump-diffusions (\cite{K07, R03, T03, WG02, WG03, DM15}). Some important properties of climate systems were explained by the addition of jumps in modeling (\cite{D99, DD09}). In many biological models, the inclusion of jumps in diffusion models has been useful: in neuronal systems (\cite{JBHD11, SG13}), as well as in ecology and evolution (\cite{JMW13, CFM06}). Our motivation comes from problems in systems biology, where basic intracellular processes are modeled by chemical reaction networks (\cite{BKPR06, W09, W18, GAK15}). Due to the complexity of multi-scale features in chemical reaction systems, the most appropriate approximation of their inherent stochasticity may require jump-diffusion models.

Although many practical results for L\'evy processes are well explored, relatively fewer are available for the more general jump-diffusions. In some applications, modeling by pure L\'evy processes is inadequate as both jump and diffusion rates will genuinely depend on the current state of the system. For example, in chemical reaction networks, jump rates and diffusion coefficients are derived from rates of interactions between different molecular species, and these rates inherently depend on the amount of species types  presently in the system. Consequently, one needs to consider stochastic differential equations driven by Poisson random measures.
Many systems also take values on positive and bounded spaces, because of the natural constraints on the amounts of species in the system. For chemical reaction models, the counts of molecules often satisfy some conservation relations in the system which keep these counts bounded above. The same is true for ecological constraints based on carrying capacities, and for some financial and engineering models with restrictions.
The reflection of the process when it reaches the boundary of its domain may be built into model dynamics. For example, jump-diffusions modeling chemical reaction systems need to have oblique reflections at the boundaries defined in order to match the same behaviour of jump Markov models (\cite{AHLW19}). 

Long term behaviour of these models reveals their stability and equilibria, and the portion of time spent in different parts of the state space. Ergodic theory quantifies averages of integrated functions of process paths, martingale methods provide standard deviations from these averages, and   large deviation theory provides more detailed results on rare departures from average behaviour. Large deviation rate functions quantify the values of dominant terms in integrated exponential functions of the process. Our study is motivated by the fact that many important biological mechanisms rely on the occurrence of rare events. In some mechanisms they lead to transitions to a new stable state and these transitions typically arise from the intrinsic stochasticity of the system (\cite{BvOC11,BQ10}). Due to population proliferation (cell growth+division, or species demography) rare events have many opportunities to occur and, though rare on the level of an individual molecule, occur with reasonable probability on the scale of the whole population.
Time additive functionals of the process, or dynamical observables, are of particular interest for experimental studies with limited access to precise values at specific time points, and easier access to empirical distributions. In chemical reaction network models occupation measures can be used to distinguish the orders of magnitudes of certain subsets of reactions within the system, and thus help determine which approximating  model is most appropriate (\cite{McSP14}).

We are interested in computing long time statistics for time additive functionals of reflected jump-diffusions. Exact and explicit expressions can be found only in very special cases, and for most processes of interest one has to rely on numerical methods of evaluation.
While long-term averages and standard deviations can be simulated using Monte Carlo methods (\cite{AG07}), large deviations are non-trivial to assess numerically. For Markov processes and small noise diffusions one can devise simulation methods of  rare events using large deviation rate functions and importance sampling techniques (\cite{B04, vEW12}), the efficiency of which is  dependent on the application in question  (\cite{RT09}, e.g. in climate modeling \cite{RWB18})). For chemical reaction dynamics there are several numerical methods for simulating functional large deviations of complete paths in small noise diffusion models (e.g. \cite{EWvE04,  vEW12}), or in pure jump Markov processes (e.g. \cite{AWtW05}). However, large deviations of time-integrated additive functionals should be less arduous than functional  large deviation paths. The framework for additive functionals relies on taking a limit as the time of integration approaches infinity, rather than a limit in which the noise of the model vanishes.
 
The first theoretical results for large deviations of occupation times and empirical measures for ergodic Markov processes date back to Donsker-Varadhan (\cite{DVIII76,DVIV83}), with additional approaches established by G\"artner (\cite{G77}) and Stroock (\cite{S84}). In this paper we use the results of Fleming-Sheu-Soner (\cite{FSS87}) to get the large deviation principle for additive functionals of reflected jump-diffusion processes assuming they are ergodic. This technique ensures the existence and uniqueness of a solution to a boundary value partial integro-differential equation (PIDE), which identifies the limiting logarithmic moment generating function of the additive process (for fixed parameter value in the generating function)  as an eigenvalue for a second order linear operator paired with its eigenfunction. Only in some special cases  is the explicit form for this eigenvalue available (c.f. \cite{FKZ15} for reflected one-dimensional Brownian motion and its  local time on the boundary).  

For one-dimensional reflected diffusions and L\'evy processes, a similar boundary value PIDE was obtained to characterize the limiting logarithmic moment generating function for additive functionals in (\cite{GW15}, \cite{AAGP15} Sec 14.4). Our results cover the more general reflected jump-diffusions  in multi-dimensional space, and provide sufficient assumptions for the existence and uniqueness of a solution to this PIDE, in order to establish the large deviation principle for the process. We thus achieve more general theoretical conclusions with the potential of a greater range of applicability. 

In order to use our result in practice, we additionally provide a numerical technique for calculating the limiting logarithmic moment generating function based on numerically solving the eigenvalue problem associated to the PIDEs. We do this by way of finite-differences to approximate the derivatives and numerical quadrature or weighted sums for the integral term. Similar methods can be implemented for multidimensional problems with a small number of variables, with some care regarding an efficient evaluation of the integral term. We test our results and their numerical implementation on two special cases, a reflected Brownian motion and a reflected  birth-death process, for which a comparison with analytic solutions is possible. We then use our results on an application that is analytically intractable: an example of a (jump)-diffusion model that approximates a system of chemical reactions. We use our large deviation results to calculate the long term mean local time at its two deterministic stable states, and the probability of departures from it. We then use this additive functional to compare the long-term behaviour of two types of approximate models for this system: a reflected diffusion  process (based on the Constrained Langevin approximation developed in \cite{LW19}), and a reflected jump-diffusion process that allows a subset of its dynamics to have sizeable noise. 


Our paper is structured as follows. Section 2.1 introduces the reflected jump-diffusion model, the  associated boundary process and the additive functional; Section 2.2 presents the results for the large deviations of this additive process; while Section 2.3 specializes the results to the case of one dimensional reflected jump-diffusions. Section 3 presents a numerical scheme used to implement the calculation of the large deviation rate function from the associated PIDE with boundary conditions in one dimension; in Sections 3.1 and 3.2 we derive analytical expressions for the rate function in two special case examples, and compare it to its numerical approximation; finally, in Section 3.3 we give numerical results for our application of a system of chemical reactions given by  a jump Markov model and compare it to a diffusion model, as well as to a jump-diffusion model. A description of the numerical scheme is presented in the Section 4 appendix.

\section{Large deviation result} 

\subsection{Preliminaries and assumptions} Let $(\Omega,\mathcal{F},\{\mathcal{F}(t)\}_{t\geq0},\mathbb{P})$ be a filtered probability space and let $m\geq0$ and $d\geq1$. Denote the Skorokhod space of right-continuous functions with finite left-hand limits (RCLL) by $\mathcal{D}\equiv\mathcal{D}_{[0,\infty)}(\mathbb{R}^d)$ and the set of RCLL functions in $\bar S$ by $\mathcal{D}(S)=\{Z\in \mathcal{D}: Z(t)\in \bar{S}, t\geq0\}$, where $S\subset\mathbb{R}^d$ will be made precise in the context of normal or oblique reflections later in this section. Consider a $d$-dimensional jump diffusion $X=\{X(t)\}_{t\geq0}$ to be a $\mathcal{D}$-measurable process that satisfies
\begin{equation}\label{eq-X}
X(t)=X(0)+\int_{0}^{t}\mu(X(s))ds+\int_{0}^{t}\sigma(X(s))dB(s)+\int_{0}^{t}\!\!\int_{\mathcal{M}}\gamma(X(s-),y)N(ds,dy),\quad X(0)=x_0\in \mathbb{R}^d,
\end{equation}
where $B=\{B(t)\}_{t\geq0}$ is an $m$-dimensional Brownian motion, adapted to, and a martingale with respect to $\{\mathcal{F}(t)\}_{t\geq0}$; $\mathcal{M}$ is a Borel measurable subset of $\mathbb{R}^n$; and the random counting measure $N(t,\mathcal{M})=\sum_{0<s\leq t}\textbf{1}_{\{\gamma(X(s-),y)\neq 0: y\in\mathcal{M}\}}$ is adapted to $\{\mathcal{F}(t)\}_{t\geq0}$, independent of $B$, and has state-dependent intensity measure $t\cdot\nu_z({A})=\mathbb{E}\left[N(t,{A})|X(t-)=z\right]$ for each $z\in\mathbb{R}^d$, ${A}\subseteq\mathcal{M}$ such that \mbox{$\int_{|y|\leq 1} |y| \sup_{z\in\mathbb{R}^d} \nu_z(dy)<\infty$}. Since the jumps of (\ref{eq-X}) are assumed to be of finite variation on finite time intervals, the sum of all jumps is well defined and we may rewrite the jump integral term in its canonical form $\int_{0}^{t}\int_{\mathcal{M}}\gamma(X(s-),y)N(ds,dy)=\sum_{0<s\leq t:\Delta X(s)\neq 0}\Delta X(s)$ where $\Delta X(s) = X(s)-X(s-)=\gamma(X(s-),y)$ and $y$ is sampled at a rate and magnitude induced by the measure $\nu_z$ (\cite{LS12}). Note that, after compensating for the mean, we have that for each $z\in\mathbb{R}^d, \mathcal{A}\subseteq\mathcal{M}, \tilde{N}(t,\mathcal{A})=N(t,\mathcal{A})-t\cdot\nu_z(\mathcal{A})$ is a martingale-valued measure.

We assume that the measurable functions $\mu:\mathbb{R}^d\to\mathbb{R}^d, \sigma:\mathbb{R}^d\to\mathbb{R}^{d\times m},$ and $\gamma:\mathbb{R}^d\times\mathcal{M}\to \mathbb{R}^{d}$ are Lipschitz in order to ensure the existence of a unique strong solution to the stochastic differential equation (SDE)  (\ref{eq-X}) (c.f. Theorem V.7, \cite{P13}). We additionally impose a linear growth condition on the coefficients, that is, we assume $\exists C_1>0$ such that for all $x_1,x_2\in\mathbb{R}^d$
\begin{equation}\label{cond-lipschitz}
||\mu(x_1)-\mu(x_2)||^2+||\sigma(x_1)-\sigma(x_2)||^2+||\int_{\mathcal{M}}\gamma(x_1,y)\nu_{x_1}(dy)-\int_{\mathcal{M}}\gamma(x_2,y)\nu_{x_2}(dy)||^2\leq C_1||x_1-x_2||^2,
\end{equation}
and $\exists C_2>0$ such that for all $x\in\mathbb{R}^d$
\begin{equation}\label{cond-linear_growth}
||\mu(x)||^2+||\sigma(x)||^2+\int_{\mathcal{M}}||\gamma(x,y)||^2\nu_x(dy)\leq C_2(1+|x|^2),
\end{equation}
where $||\cdot||$ denotes the appropriate Euclidean norm.\\

We next introduce a solution to the stochastic differential equation with reflection (SDER). 
It is related to the {\bf Skorokhod problem (SP)}, which for a given process $\phi$ defines an {\bf associated boundary process} $\eta$,  with finite variation on finite time intervals, whose variation increases only when $\phi$ is on the boundary of a given domain, in such a way that ensures the reflected process $\varphi=\phi+\eta$ remains in the domain. The SP has been established for various processes in different domains including: multidimensional diffusions in convex domains (\cite{T79}),  in general domains satisfying conditions (A), (B) (defined below) and admissibility conditions (\cite{LS84}), where the latter conditions were relaxed in (\cite{S87}). It was also extended to processes with RCLL paths in convex polyhedra (\cite{DI91}), for general semimartingales in convex regions (\cite{AL91}), and in non-smooth domains (\cite{C92}). 

We consider processes with both normal and oblique reflections, with slightly different sets of assumptions that ensure the respective SDER is well-defined. Let $S\subset \mathbb{R}^d$ be a bounded convex set, and $\mathcal{N}_x$ be the set of all inward unit normal vectors at $x\in\partial S$: $\mathcal{N}_x=\cup_{r>0}\mathcal{N}_{x,r},~\mathcal{N}_{x,r}=\{\textbf{n}\in\mathbb{R}^d:|\textbf{n}|=1, B(x-r\textbf{n},r)\cap S=\emptyset\}$, where $B(z,r)=\{y\in\mathbb{R}^d:|y-z|<r, z\in\mathbb{R}^d, r>0\}$ is the ball with radius $r>0$ around a point $z\in\mathbb{R}^d$. Consider the following assumptions on $S$ from (\cite{S93}, \cite{LS84}, \cite{S87}):
\begin{itemize}
	\item[(A)] There exists a constant $r_0>0$ such that $\mathcal{N}_x=\mathcal{N}_{x,r_0}\neq\emptyset$ for every $x\in\partial S$; 
	\item[(B)] There exist constants $\delta>0, \beta\geq 1$ such that for every $x\in\partial S$ and for every $\textbf{n}\in\cup_{y\in B(x,\delta)\cap \partial S}\mathcal{N}_y$, there exists a unit vector $\textbf{e}_x$ such that $\langle \textbf{e}_x,n\rangle \geq \frac{1}{\beta}$, where $\langle\cdot,\cdot\rangle$ denotes the inner product.
\end{itemize}
Condition (A) guarantees the existence of a unit normal vector at each point on the boundary with a uniform sphere about itself. This condition is satisfied by the boundedness assumption on $S$ (c.f. \cite{AL91}, \cite{T79}) and $r_0=+\infty$ by the convexity of $S$ (c.f. Remark 1(iii), \cite{S93}). Condition (B) is equivalent to defining a uniform cone on the interior of $S$ at each boundary point. Call $V\in\mathcal{D}(S)$ a solution to the {\bf stochastic differential equation with normal reflection} if there exists an associated boundary process $L\in\mathcal{D}(\mathbb{R})$ such that $L(0)=0$, and $L(t)=\int_{0}^{t}\textbf{1}_{\{V(s)\in\partial S\}}dL(s)$, that is $L$ is equal to its own total boundary variation on $[0,t]$, and for all $t\ge 0$  $V(t)$  satisfies the equation
\begin{equation}\label{eq-V-norm}
	V(t)=V(0)+\int_{0}^{t}\mu(V(s))ds+\int_{0}^{t}\sigma(V(s))dB(s)+\int_{0}^{t}\!\!\int_{\mathcal{M}}\gamma(V(s-),y)N(ds,dy)+\int_{0}^{t}\textbf{n}(V(s))dL(s),\quad V(0) \in \overline S.
\end{equation}  
If we assume that $\mu,\sigma,$ and $\gamma$ are bounded on $S$, it was shown  (Theorem 5, \cite{S93}) by a step function approximation that there exists a solution of the SP with normal reflection and a unique strong solution to the SDER (\ref{eq-V-norm}). 

We also consider oblique reflections at the boundary as they are used in chemical reaction approximations to match the combined behaviour of all the reactions that are active on the boundaries (c.f. \cite{LW19}). An analogous reflected process exists if we impose some additional assumptions. Let $S\subset \mathbb{R}^d$ be a bounded simply connected region  with a smooth, connected and orientable boundary $\partial S$ (the condition on the boundary can be relaxed, c.f. Remark 5 \cite{MR85}). Define a twice continuously differentiable vector field $\rho$ in a neighborhood of $\bar{S}$ such that $-\rho(x)\cdot n(x)\geq \epsilon > 0, \forall x\in\partial S$. Assume that for all $x\in\bar{S},y\in\mathcal{M}, x+\gamma(x,y)\in\bar{S}$, that is, all jumps from $\bar S$ remain in $\bar S$ (c.f. Section 2, \cite{MR85}). Call $V\in\mathcal{D}(S)$ a solution to the {\bf stochastic differential equation with oblique reflection} if there exists a continuous associated boundary process $L$ such that  $L(0)=0$ and $L(t)=\int_{0}^{t}\textbf{1}_{\{V(s)\in\partial S\}}dL(s)$, and for all $t\ge 0$  $V(t)$  satisfies the equation
\begin{equation}\label{eq-V-arb}
V(t)=V(0)+\int_{0}^{t}\mu(V(s))ds+\int_{0}^{t}\sigma(V(s))dB(s)+\int_{0}^{t}\!\!\int_{\mathcal{M}}\gamma(V(s-),y)N(ds,dy)+\int_{0}^{t}\rho(V(s))dL(s),\quad V(0) \in \overline S.
\end{equation}
The existence and uniqueness of a solution to \eqref{eq-V-arb} was shown in (\cite{MR85}) by first establishing a solution for normal reflection by a penalization argument and then constructing a diffeomorphism between $\bar S$ and a closed unit ball where $\rho$ is mapped to an outward normal vector to extend existence and uniqueness to appropriate oblique reflections as well. Note that by setting $\rho\equiv \textbf{n}$,  we get the normally reflected process \eqref{eq-V-norm} as a special case of  \eqref{eq-V-arb}, but with more restrictions imposed on $S$ and $L$ for the oblique case. 

Solutions to SDERs have been established under different assumptions on its driving processes and domains. The first results (\cite{T79}) are for a diffusion process in a convex domain with normal reflection, extended by (\cite{LS84}) for a diffusion with normal and arbitrary reflections in their domain. It was further shown in (\cite{DI93}) that obliquely reflected diffusions also exist in non-smooth domains with corners. For RCLL processes, it was shown that there exists a unique solution in the positive half-space (\cite{CElKM80}), and more recently (\cite{LS03}) established  existence of arbitrarily reflected semimartingales allowing jumps at the boundary of the domain, using convergence of approximating processes in the S-topology.     \\   

We next introduce the additive functional of the reflected jump-diffusion process $V$. Let $f$ be a bounded continuous function on $\bar S$, and define the {\bf additive functional} $\Lambda$ as 
\begin{equation}\label{eq-L}
\Lambda(t)=\int_0^t f(V(s))ds +\int_0^t f(V(s))\, dL^c(s).
\end{equation}
where $L^c$ is the continuous part of the associated boundary process $L$,
\begin{equation}\label{eq-lc}
	L^c(t) = L(t) - \sum\limits_{0<s\leq t:\Delta L(s)\neq 0}\Delta L(s).
\end{equation} 
Using a sequence of continuous functions approximating the step function $f=\mathbf 1_A$,  we can recover (by convergence of associated additive functionals) 
the total occupation time of $V$ in an arbitrary $A\subset S$. For $A\subset \partial S$, we can also  recover the total variation over $A$ of the continuous part of the associated boundary process $L^c$. Many other quantities of interest may be studied by an appropriate choice of $f$. \\

\subsection{Main result}
Our main result considers the large deviation principle for  the additive functional $\Lambda$ in terms of a PIDE for calculating its logarithmic moment generating (spectral radius) function.

We make the following assumptions on the transition semigroup of the reflected jump-diffusion $V$ (which by uniqueness of solutions to the SDER is a Markov process). These  ensure $V$ is a Feller process whose  occupation measure converges to the invariant measure exponentially fast (c.f. \cite{DVIV83} p.187 I-II, \cite{FSS87} (2.1)-(2.3)). 
Assume there exists a probability measure $\mu$ on $\bar S$ such that for all $t>0$:
\vspace{.2cm}
\begin{itemize}
			\item[(i)] the semigroup $T_t$ of $V$ has a density $p(t,x,y)$ relative to $\mu$: $T_tu(x)= \int_{\bar S} u(y) p(t,x,y) \mu(dy)$;
			\item[(ii)] $0<a(t)\leq p(t,x,y)$ for some strictly positive function $a(t)$ and for all $x,y\in \bar S$; and
			\item[(iii)] $\lim\limits_{x\to x_0}||p(t,x,\cdot)-p(t,x_0,\cdot)||_{L^1(S,\mu)}=0$.		\end{itemize}
\vspace{.2cm}

\begin{theorem}[{\bf PIDE and exponential martingale}]\label{thm-mgale} For all $\theta\in \R$ there exists a unique, up to a constant, positive twice continuously differentiable function  $u_{\theta}(\cdot)\in C_+^2(\bar S)$ on $\bar S$, and a scalar $\psi_{\theta}\in\R$ such that the pair $(u_{\theta}(\cdot),\psi_{\theta})$ satisfies the partial-integro differential equation 
		\begin{align}\label{eq-PIDE}
		\sum_{i=1}^d\partial_{x_i}u_{\theta}(x)\mu_i(x)+\frac12 \sum_{i,j=1}^d&\partial^2_{x_ix_j}u_{\theta}( x)(\sigma\sigma^T)_{ij}(x)+\int_{\mathcal{M}}[
		u_{\theta}(r(x,y))-u_{\theta}(x)]\nu_x(dy)\nonumber\\
		&\;\;+u_{\theta}(x)(\theta f(x)-\psi_{\theta})=0, \; \forall x\in S
		\end{align}
		subject to boundary conditions (if $L^c\not\equiv 0$) 
		\begin{align}\label{eq-PIDEbdry}
		\theta f(x)u_{\theta}(x)+\sum_{i=1}^{d}\partial_{x_i}u_{\theta}(x)\rho_{i}(x)=0, \; \forall x\in\partial S
		\end{align}
		and such that \begin{equation}\label{eq-mgale}M_{\theta}(t)=e^{\theta \Lambda(t)-\psi_{\theta}t} u_{\theta}( V(t))\end{equation} is a martingale.
	\end{theorem}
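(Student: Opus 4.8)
The plan is to separate the statement into two parts: the solvability of the eigenvalue problem \eqref{eq-PIDE}--\eqref{eq-PIDEbdry}, and, once such a solution is available, the verification that \eqref{eq-mgale} is a martingale. For the first part I would recast the PIDE as a principal-eigenvalue problem for the tilted generator. Writing the interior operator
\[
\mathcal{L}u(x)=\sum_{i=1}^d \partial_{x_i}u(x)\mu_i(x)+\tfrac12\sum_{i,j=1}^d\partial^2_{x_ix_j}u(x)(\sigma\sigma^T)_{ij}(x)+\int_{\mathcal{M}}[u(x+\gamma(x,y))-u(x)]\nu_x(dy),
\]
equation \eqref{eq-PIDE} reads $(\mathcal{L}+\theta f)u_\theta=\psi_\theta u_\theta$ on $S$, with \eqref{eq-PIDEbdry} as the boundary condition encoding the reflection. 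The natural probabilistic object is the tilted (Feynman--Kac) semigroup $T_t^\theta u(x)=\mathbb{E}_x[e^{\theta\Lambda(t)}u(V(t))]$, a positive linear operator whose infinitesimal generator is $\mathcal{L}+\theta f$ subject to the oblique condition \eqref{eq-PIDEbdry}. Since $f$ is bounded on $\bar S$, the tilting leaves intact the structural features encoded in conditions (i)--(iii): relative to $\mu$ the semigroup remains compact, strongly positive, and irreducible. The Krein--Rutman theorem, in the form used within the Fleming--Sheu--Soner framework \cite{FSS87} (see also \cite{DVIV83}), then furnishes a simple, isolated principal eigenvalue $e^{\psi_\theta t}$ whose one-dimensional eigenspace is spanned by a strictly positive eigenfunction $u_\theta$, giving both existence and uniqueness up to a positive multiplicative constant.

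The remaining analytic point is to promote this abstract eigenfunction to a classical $C^2_+(\bar S)$ solution of \eqref{eq-PIDE}--\eqref{eq-PIDEbdry}. Here I would invoke interior and boundary elliptic (Schauder) regularity for the second-order operator $\mathcal{L}$, treating the nonlocal term $\int_{\mathcal{M}}[u_\theta(x+\gamma(x,y))-u_\theta(x)]\nu_x(dy)$ as a lower-order forcing term: the assumption that jumps from $\bar S$ remain in $\bar S$ keeps this term well defined, and the finite-variation control $\int_{|y|\le 1}|y|\sup_z\nu_z(dy)<\infty$ together with \eqref{cond-lipschitz}--\eqref{cond-linear_growth} ensures it sends bounded functions to H\"older-continuous ones. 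The boundary condition \eqref{eq-PIDEbdry} is precisely the statement that $u_\theta$ lies in the domain of the reflected generator: the directional derivative $\nabla u_\theta\cdot\rho$ along the reflection field must offset the contribution $\theta f u_\theta$ arising from the $f\,dL^c$ term in \eqref{eq-L}. I expect this simultaneous handling of the nonlocal integral term and the oblique boundary condition to be the main obstacle, and the place where smoothness of $\rho$, ellipticity of $\sigma\sigma^T$, and the geometric conditions (A)--(B) on $S$ are all used.

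Given the pair $(u_\theta,\psi_\theta)$, the martingale property follows from It\^o's formula for reflected jump-diffusions. Set $Y(t)=e^{\theta\Lambda(t)-\psi_\theta t}$, which by \eqref{eq-L} is a continuous finite-variation process with $dY(t)=Y(t)[\theta f(V(t))\,dt+\theta f(V(t))\,dL^c(t)-\psi_\theta\,dt]$. Applying integration by parts to $M_\theta=Y\,u_\theta(V)$ (the covariation term vanishes because $Y$ is continuous of finite variation) and then Dynkin's formula to $u_\theta(V(t))$ via \eqref{eq-V-arb}, the $dt$-part of $dM_\theta(t)$ equals $Y(t)[\mathcal{L}u_\theta(V(t))+\theta f(V(t))u_\theta(V(t))-\psi_\theta u_\theta(V(t))]\,dt$, which is identically zero by \eqref{eq-PIDE}; the $dL^c$-part equals $Y(t)[\nabla u_\theta(V(t))\cdot\rho(V(t))+\theta f(V(t))u_\theta(V(t))]\,dL^c(t)$, which vanishes by \eqref{eq-PIDEbdry} since $L^c$ increases only on $\{V(s)\in\partial S\}$. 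What remains is a sum of stochastic integrals against $dB$ and against the compensated measure $\tilde N(ds,dy)$, so $M_\theta$ is a local martingale.

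Finally, to upgrade the local martingale to a true martingale I would localize and pass to the limit using dominated convergence. Both $u_\theta$ and $\nabla u_\theta$ are bounded on the compact $\bar S$, $\sigma$ is bounded on $S$, and $f$ is bounded, so the integrands of the $dB$ and $\tilde N$ integrals are controlled; the integrability afforded by \eqref{cond-linear_growth} together with the boundedness of $f$ governs the exponential weight $Y$ on each finite interval and supplies the uniform integrability needed to remove the localizing sequence. This yields the martingale property of \eqref{eq-mgale} and completes the argument.
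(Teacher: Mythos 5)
Your proposal is correct and follows essentially the same route as the paper: the paper likewise verifies the martingale property via It\^o's formula (the PIDE annihilating the $ds$ terms, the boundary condition annihilating the $dL^c$ terms, and boundedness of the integrands upgrading the local martingale to a true one), and obtains existence and uniqueness of $(u_\theta,\psi_\theta)$ from the tilted Feynman--Kac semigroup $\tilde T_t u(x)=\mathbb{E}[e^{\theta\Lambda(t)}u(V(t))\,|\,V(0)=x]$ via positive-operator (Krasnoselskii/Krein--Rutman) theory under assumptions (i)--(iii), following \cite{FSS87}. The only substantive difference is bookkeeping: the paper explicitly carries out the consistency-in-$t$ step that you delegate to the Fleming--Sheu--Soner framework (showing the principal eigenvalue of $\tilde T_t$ is $e^{\psi_\theta t}$ with a $t$-independent eigenfunction), while your Schauder-regularity remarks address a smoothness point the paper leaves implicit.
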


\begin{proof}
We start by identifying the equations that $u_\theta$ and $\psi_\theta$ would need to satisfy in order for $M_\theta$ to be a martingale.
We use of the following notation for ease of exposition. For $x\in S$, $y\in \mathcal{M}$, let $[x+\gamma(x,y)]_{_{\partial S}}$ denote the projection onto $\partial S$ resulting from a jump exceeding the region $S$. Define $r:S\times \mathcal{M}\to \overline S$ 
$$
r(x,y)=\left\{\begin{array}{ll}
x+\gamma(x,y)& \textrm{if }x+\gamma(x,y)\in S\\
\left[x+\gamma(x,y)\right]_{_{\partial S}}& \textrm{if }x+\gamma(x,y)\notin S\\
\end{array}\right..
$$ 
As $V$ is a semi-martingale we can apply It\^o's formula  (Theorem II.33, \cite{P13}) to $M_{\theta}(t)$. Since $L$ has paths of finite variation on finite intervals we have that $[\Lambda,V]^c(t)=[\Lambda,\Lambda]^c(t)=0$ and $[V,V]^c(t)=\int_{0}^{t}\sigma\sigma^T(V(s))ds$ where $[\cdot,\cdot]_t^c$ denotes the continuous part of the quadratic covariation between the two processes. Ito's formula on  \eqref{eq-mgale} gives 
		\begin{align*}
			M_{\theta}(t)-&M_{\theta}(0) = \int_{0}^{t}e^{\theta\Lambda(s-)-\psi_{\theta}s-}(-\psi_{\theta})u_{\theta}(V(s-))ds+\int_{0}^{t}e^{\theta\Lambda(s-)-\psi_{\theta}s-}\theta u_{\theta}(V(s-))d\Lambda^c(s)\\
			&+\int_{0}^{t}e^{\theta\Lambda(s-)-\psi_{\theta}s-}\sum_{i=1}^{d}\partial_{x_i}u_{\theta}(V(s-))dV^c(s)\\
			&+\frac{1}{2}\int_{0}^{t}e^{\theta\Lambda(s-)-\psi_{\theta}s-}\sum_{i,j=1}^{d}\partial_{x_ix_j}^2u_{\theta}(V(s-))(\sigma\sigma^T)_{ij}(V(s-))ds\\
			&+\sum_{0<s\leq t:\Delta V(s)\neq 0}e^{\theta\Lambda(s-)-\psi_{\theta}s-}\left(
			u_{\theta}(V(s))-u_{\theta}(V(s-))\right)
		\intertext{Replacing $\Lambda^c$, $V^c$ with their definitions, compensating the jumps, collecting all like integrators, and replacing the summation by it's Poisson representation, we have}
			M_{\theta}(t)-&M_{\theta}(0)
			=\int_{0}^{t}e^{\theta\Lambda(s-)-\psi_{\theta}s-}\biggl(\left(\theta f(V(s-))-\psi_{\theta}\right)u_{\theta}(V(s-))+\sum_{i=1}^{d}\partial_{x_i}u_{\theta}(V(s-))\mu_i(V(s-))\\
			&+\frac{1}{2}\sum_{i,j=1}^{d}\partial_{x_ix_j}^2u_{\theta}(V(s-))(\sigma\sigma^T)_{ij}(V(s-))
			+\int_{\mathcal{M}}\left(
			u_{\theta}(r(V(s-),y))-u_{\theta}(V(s-))\right)\nu_{V(s-)}(dy)\biggr)ds\\
			&+\int_{0}^{t}e^{\theta\Lambda(s-)-\psi_{\theta}s-}\biggl(\theta f(V(s-))u_{\theta}(V(s-))+\sum_{i=1}^{d}\partial_{x_i}u_{\theta}(V(s-))\rho_{i}(V(s-))\biggr)dL^c(s)\\
			&+\int_{0}^{t}e^{\theta\Lambda(s-)-\psi_{\theta}s-}\sum_{i=1}^{d}\partial_{x_i}u_{\theta}(V(s-))\sum_{j=1}^{d}\sigma_{ij}(V(s-))dB_j(s)\\
			&+\int_{0}^{t}\int_{\mathcal{M}}e^{\theta\Lambda(s-)-\psi_{\theta}s-}\left(u_{\theta}(r(V(s-),y))-u_{\theta}(V(s-))\right)\left(N(ds,dy)-\nu_{V(s-)}(dy)ds\right)
			\end{align*}
			For any pair $(u_{\theta}(\cdot),\psi_{\theta})$  satisfying the  equations (\ref{eq-PIDE})-(\ref{eq-PIDEbdry}) we then get
			\begin{align*}
			M_{\theta}(t)-&M_{\theta}(0)=\int_{0}^{t}e^{\theta\Lambda(s-)-\psi_{\theta}s-}\sum_{i=1}^{d}\partial_{x_i}u_{\theta}(V(s-))\sum_{j=1}^{d}\sigma_{ij}(V(s-))dB_j(s)\\
			&+\int_{0}^{t}\int_{\mathcal{M}}e^{\theta\Lambda(s-)-\psi_{\theta}s-}\left(u_{\theta}(r(V(s-),y))-u_{\theta}(V(s-))\right)\left(N(ds,dy)-\nu_{V(s-)}(dy)ds\right)
		\end{align*}
			For $u_{\theta}(\cdot)\in C_+^2(\bar S)$ the term $\sum_{i=1}^d\partial_{x_i}u_{\theta}(\cdot)$ is bounded on $\bar S$. Likewise $\sigma$ is assumed Lipschitz continuous and so it is bounded on $\bar S$. Since $f$ is continuous, we also have 
			\[\sup\limits_{0<s\leq t}\left|e^{\theta\Lambda(s)}\right|\leq e^{\theta t\cdot \|f\|_{\infty,\bar S}+Ct\cdot \|f\|_{\infty,\bar S}}<\infty\]
			for some constant $C$ satisfying $L^c(t)\leq Ct$ and where $\|f\|_{\infty,\bar S}=\sup\{|f(x)|:\; x\in \bar S \}$. Then we have
			\[\int_0^t \int_S \mathbb{E}\big[e^{\theta\Lambda(s-)}[u_\theta(r(V(s-),y))-u_\theta(V(s-))]\big] \nu_{V(s-)}(dy)ds<2te^{\theta\tilde C t\|f\|_{\infty,\bar S}}\|u_\theta\|_{\infty, \bar S}< \infty, \]
			for some constant $\tilde C$ large enough.	We can conclude that both the integral with respect to Brownian motion and the integral with respect to the compensated Poisson random measure are martingales since any uniformly bounded local martingale is a martingale. Hence their sum with $M_{\theta}(0)$, i.e. $M_{\theta}(t)$, is a martingale.

For the existence and uniqueness of $u_\theta$ and $\psi_\theta$ satisfying (\ref{eq-PIDE})-(\ref{eq-PIDEbdry}) we next summarize the arguments from Theorem 4.1 in (\cite{FSS87}). 
Let $\tilde T_t$ be the strongly continuous
semigroup  defined by \[\tilde T_t u(x)=\mathbb{E}[e^{\theta \Lambda(t)} u(V(t))|V(0)=x].\] 
Let $\mathcal{L}$ be the second order linear operator on $u\in C^2(\bar S)$ given by
		\begin{equation}\label{eq-operator}
			\mathcal{L}u(x) = \sum_{i=1}^d\partial_{x_i}u(x)\mu_i(x)+\frac12 \sum_{i,j=1}^d\partial^2_{x_ix_j}u(x)(\sigma\sigma^T)_{ij}(x)+\int_{\mathcal{M}}[u(r(x,y))-u(x)]\nu_x(dy).
		\end{equation}
Similarly to our earlier calculation, It\^o's formula gives
		\begin{align*}
			e^{\theta\Lambda(t)}u(V(t))-&u(V(0))=\int_{0}^{t}e^{\theta\Lambda(s-)}\big(\mathcal{L}u(V(s-))+ \theta f(V(s-))u(V(s-))\big)ds\\
			&+\int_{0}^{t}e^{\theta\Lambda(s-)}\biggl(\theta f(V(s-))u(V(s-))+\sum_{i=1}^{d}\partial_{x_i}u(V(s-))\rho_{i}(V(s-))\biggr)dL^c(s)\\
			&+\int_{0}^{t}e^{\theta\Lambda(s-)}\sum_{i=1}^{d}\partial_{x_i}u(V(s-))\sum_{j=1}^{d}\sigma_{ij}(V(s-))dB_j(s)\\
			&+\int_{0}^{t}\int_{\mathcal{M}}e^{\theta\Lambda(s-)}\left(u(r(V(s-),y))-u(V(s-))\right)\left(N(ds,dy)-\nu_{V(s-)}(dy)ds\right).
		\end{align*}
Taking expectations, and as the last two integrals are martingales, we get that $\,\tilde T_t\,u - u = \int_{0}^{t}\tilde T_s\,\tilde{\mathcal{L}}u\,ds$ \;holds with 
the infinitesimal generator of $\tilde T_t$ given by the second order operator $\tilde{\mathcal{L}}=\mathcal{L} +\theta f$, on the set of functions  \[{D}(\tilde{\mathcal{L}})=\{u\in C^2(\bar{S}): \theta f(x)u(x)+\sum_{i=1}^d\partial_{x_i}u(x)\rho_{i}(x)=0, \forall x\in \partial S\}.\]
Assumptions (i)-(iii) on the semigroup of $V$ imply that  the semigroup $\tilde T_t$ also has a density $\tilde{p}(t,x,y)$ with respect to $\mu$ for each $t>0$ and satisfies (ii)-(iii), which can be easily verified. For each $t>0$ this ensures existence and uniqueness (\cite{K64} Theorems 2.8, 2.10) of a pair $(u_{\theta,t}, \psi_{\theta,t})$ of a positive continuous function $u_{\theta,t}$ on $\bar S$ and $\psi_{\theta,t}\in \mathbb R$, such that
\[\tilde T_t u_{\theta,t}=e^{\psi_{\theta,t}}u_{\theta,t}\;\mbox{ and }\;  \max_{x\in \bar S}u_{\theta,t}(x)=1\]
One can further show (c.f. the argument in \cite{FSS87} p.7), that there exist a probability measure $\eta_\theta$ and $\psi_\theta\in \mathbb R$ independent of $t$, such that for all $v\in C(\bar S)$  and all $t>0$
\[\int_{x\in\bar S} \tilde T v(x)\eta_\theta(dx)=e^{\lambda_\theta\cdot t}\int_{x\in \bar S}v(x)\eta_\theta(dx).\] 
Integrating $\tilde T_t u_{\theta,t}$ with respect to $\eta_\theta$, together with uniqueness of $\psi_{\theta,t}$ then imply that $\psi_{\theta,t}=\psi_\theta\, t$. Iterating the semigroup property gives $\tilde T_{nt} u_{\theta,t}=e^{\psi_\theta \,{nt}}u_{\theta,t}$, and then uniqueness of $u_{\theta,t}$ implies $u_{\theta,t}=u_{\theta,1}$ $\forall t$ rational. Since $\tilde T_t u_{\theta,1}\in {D}(\tilde{\mathcal L})$ and is positive, we have $u_{\theta,1}\in {D}_+(\tilde{\mathcal L})$. Furthermore, $\tilde{\mathcal L}u_{\theta,1}=\psi_\theta u_{\theta,1}$, so that uniqueness of $u_{\theta,1}$ implies uniqueness, up to a constant,  of this positive eigenfunction for $\tilde{ \mathcal L}=\mathcal L + \theta f$. 

A positive eigenfunction and eigenvalue of the problem 
$\tilde{\mathcal{L}}u_\theta(x)= \psi_\theta u_\theta(x)$
with eigenfunction satisfying the boundary constraint  of ${D}(\tilde{\mathcal L})$ are  the desired solution $(u_\theta,\psi_\theta)$ of \eqref{eq-PIDE}-\eqref{eq-PIDEbdry}.	
\end{proof}

\begin{remark}
The proof above identifies, for each $\theta\in\mathbb R$ and $f\in C(\bar S)$ bounded, the constant $\psi_\theta$ as the principal eigenvalue of the operator $\tilde{\mathcal L}=\mathcal L +\theta f$, where $\mathcal L$ is the generator of the reflected jump-diffusion $V$ as in \eqref{eq-operator}. The next result will show it is the limit of log-moment generating function of the additive process $\Lambda$. Donsker-Vardhan theory (\cite{DVIII76,DVIV83}) then implies it can also be expressed in terms of a variational problem (c.f. Thm 1.1. in \cite{FSS87}).
\end{remark}

Viewed as a function of $\theta\in\mathbb{R}$ the eigenvalue $\psi_\theta$ is used to establish the large deviations for $\Lambda$ defined as follows. $\{\Lambda(t)\}_{t\ge 0}$ is said to satisfy the {\bf large deviation principle (LDP)} with rate $t$ and good rate function $I$ if:\, $I\not\equiv \infty$; $I$ has compact level sets (so is lower-semicontinuous); $$\limsup_{t\to\infty}\frac 1t \log \mathbb P(\Lambda(t)\in C)\le \inf _{x\in C} I(x), \; \forall C\subset \mathbb{R}\text{ closed},$$ and $$\limsup_{t\to\infty}\frac 1t \log \mathbb P(\Lambda(t)\in O)\le \inf _{x\in O} I(x), \; \forall O\subset \mathbb{R}\text{ open.}$$
Using the result of Thm 4.1 in (\cite{FSS87}) one can prove this LDP using their Thm 1.1, or one can use the G\"artner-Ellis theorem (c.f. \cite{DZ98} Thm 2.3.6, \cite{dH00} Thm V.6) as below. 

\begin{corollary}[\bf logarithmic moment generating function and G\"artner-Ellis]\label{cor-ge} 
For all $\theta\in \mathbb R$ the logarithmic moment generating function of $\Lambda$ satisfies
		\begin{equation}\label{eq-limitgf}
		\lim_{t\to\infty}\frac{1}{t}\log{\mathbb E\left[e^{\theta\Lambda(t)}\right]}=\psi_{\theta},
		\end{equation}
and $\Lambda$ satisfies the LDP with rate $t$ and good rate function $I=\psi^*$ given by the Legendre transform of $\psi$
		\begin{equation}\label{eq-rateftion}
		\psi^*(x)=\sup_{\theta}[\theta x-\psi_{\theta}].
		\end{equation}

	\end{corollary}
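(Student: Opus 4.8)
The plan is to read everything off the exponential martingale $M_\theta$ produced by Theorem \ref{thm-mgale} and then feed the resulting limit into the G\"artner-Ellis theorem. First I would exploit the martingale identity: since $M_\theta$ is a martingale and $\Lambda(0)=0$, taking expectations in \eqref{eq-mgale} gives $\mathbb{E}[M_\theta(t)]=M_\theta(0)=u_\theta(x_0)$, i.e. $\mathbb{E}\big[e^{\theta\Lambda(t)}u_\theta(V(t))\big]=u_\theta(x_0)\,e^{\psi_\theta t}$. Because $u_\theta\in C^2_+(\bar S)$ is strictly positive and continuous on the compact set $\bar S$, there are constants $0<c_\theta\le u_\theta\le C_\theta<\infty$, and since $e^{\theta\Lambda(t)}\ge 0$ pointwise I can sandwich
\[ \frac{u_\theta(x_0)}{C_\theta}\,e^{\psi_\theta t}\ \le\ \mathbb{E}\big[e^{\theta\Lambda(t)}\big]\ \le\ \frac{u_\theta(x_0)}{c_\theta}\,e^{\psi_\theta t}. \]
Applying $\tfrac1t\log(\cdot)$ and letting $t\to\infty$ kills the $\theta$-dependent prefactors and yields \eqref{eq-limitgf}, valid for every $\theta\in\mathbb{R}$ (the argument is insensitive to the sign of $\theta$, as finiteness of the middle term is forced by the identity together with $u_\theta\ge c_\theta>0$).

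With \eqref{eq-limitgf} established, the LDP will follow from the G\"artner-Ellis theorem (\cite{DZ98} Thm 2.3.6) once I verify its hypotheses for $\psi$. By Theorem \ref{thm-mgale} the limit $\psi_\theta$ is finite for every $\theta\in\mathbb{R}$, so the effective domain of $\psi$ is all of $\mathbb{R}$ and in particular $0$ lies in its interior. Convexity of $\theta\mapsto\psi_\theta$ is inherited from the prelimit functions $\tfrac1t\log\mathbb{E}[e^{\theta\Lambda(t)}]$, each of which is convex in $\theta$ by H\"older's inequality, and a pointwise limit of convex functions is convex. Lower semicontinuity is then automatic from finiteness and convexity on $\mathbb{R}$.

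The hard part will be essential smoothness. Since the effective domain is all of $\mathbb{R}$ there is no boundary, so the steepness requirement is vacuous and essential smoothness reduces to showing that $\theta\mapsto\psi_\theta$ is everywhere differentiable. A finite convex function on $\mathbb{R}$ is already differentiable off an at most countable set, so the only danger is isolated kinks; to exclude them I would use the spectral characterization from Theorem \ref{thm-mgale} and the remark following it, where $\psi_\theta$ is identified as the \emph{simple} principal eigenvalue of $\tilde{\mathcal L}=\mathcal L+\theta f$ with positive eigenfunction $u_\theta$. Because this eigenvalue is simple and isolated and $\theta\mapsto\tilde{\mathcal L}$ is an analytic (indeed affine) family, analytic perturbation theory (Kato) gives that $\theta\mapsto\psi_\theta$ is real-analytic on $\mathbb{R}$, in particular $C^1$; this supplies the essential smoothness needed.

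Finally, feeding the finite, convex, essentially smooth and lower semicontinuous $\psi$ into the G\"artner-Ellis theorem delivers the full LDP for $\{\Lambda(t)\}_{t\ge0}$ at rate $t$ with rate function the Legendre transform $\psi^{\ast}$ of \eqref{eq-rateftion}. Goodness of $\psi^{\ast}$ (compact level sets) follows because $\psi$ is finite on all of $\mathbb{R}$, which forces $\psi^{\ast}$ to grow superlinearly and hence to have bounded, and by lower semicontinuity closed, level sets.
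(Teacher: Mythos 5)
Your proof is correct and follows the same overall route as the paper: the martingale identity from Theorem \ref{thm-mgale} plus the sandwich $0<\inf u_\theta\le u_\theta\le \sup u_\theta<\infty$ gives \eqref{eq-limitgf}, and the LDP then comes from G\"artner--Ellis with $D_\psi=\mathbb{R}$. Where you diverge is in how two of the G\"artner--Ellis hypotheses are checked, and both divergences are worth noting. For lower semicontinuity, the paper takes $\theta_n\uparrow\theta$ and interchanges limits via monotone convergence to show sublevel sets are closed --- an argument that is actually delicate, since $e^{\theta_n\Lambda(t)}$ is monotone in $n$ only on the event $\{\Lambda(t)\ge 0\}$; your route (convexity of the prelimit functions by H\"older, convexity of the pointwise limit, hence continuity of the finite convex function $\psi$ on $\mathbb{R}$) sidesteps this entirely and is cleaner. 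For differentiability, the paper invokes the implicit function theorem applied to the simple principal eigenvalue (citing \cite{FSS87}, \cite{KM03}), whereas you invoke Kato's analytic perturbation theory for the affine family $\mathcal{L}+\theta f$; these are parallel in spirit and both rest on simplicity of the eigenvalue, though your version tacitly assumes the principal eigenvalue is \emph{isolated} in the spectrum, which --- like the paper's own one-line citation --- would need the compactness/positivity structure coming from assumptions (i)--(iii) to be fully justified. Your closing observation that finiteness of $\psi$ on all of $\mathbb{R}$ forces superlinear growth of $\psi^*$, hence goodness of the rate function, is correct and makes explicit a point the paper leaves to the statement of G\"artner--Ellis.
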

\begin{proof}
Since $M_{\theta}(t)$ is a martingale $\mathbb{E}\left[e^{\theta\Lambda(t)-\psi_{\theta}t}u_{\theta}(V(t))\right]=u_{\theta}(V(0)).$ By the positivity and boundedness of $u_{\theta}(x)$, it follows that
		\[
		e^{-\psi_{\theta}t}\mathbb{E}[e^{\theta\Lambda(t)}]\inf\limits_{x\in \bar S} u_{\theta}(x)\leq u_{\theta}(V(0))\leq e^{-\psi_{\theta}t}\mathbb{E}[e^{\theta\Lambda(t)}]\sup\limits_{x\in \bar S} u_{\theta}(x),
		\]
		which implies
		\begin{equation}\label{psi-estim}
			\frac{1}{t}\log\mathbb{E}\left[e^{\theta\Lambda(t)}\right]+\frac{1}{t}\log\inf\limits_{x\in \bar S} u_{\theta}(x)\leq \psi_{\theta}+\frac{1}{t}\log u_{\theta}(V(0)) \leq\frac{1}{t}\log\mathbb{E}\left[e^{\theta\Lambda(t)}\right]+\frac{1}{t}\log\sup\limits_{x\in \bar S} u_{\theta}(x).
		\end{equation}
		Therefore, \eqref{eq-limitgf} follows by taking $t\to\infty$.

The G\"artner-Ellis theorem requires that the following properties of $\psi_\theta$ as a function of $\theta$ are satisfied:  \\
(a) $0\in \mbox{int}(\mathcal D_\psi)$ where $D_{\psi}=\{\theta\in\mathbb{R}:\psi_{\theta}<\infty\}$; (b) $\psi$ is lower semi-continuous in $\theta$; (c) $\psi_\theta$ is differentiable with respect to $\theta$ on $\mbox{int}(\mathcal D_\psi)$; and (d) $D_\psi=\R$ or $\lim_{\theta\in\mathcal D_\psi\to \partial D_\psi}|\nabla\psi_{\theta}|=\infty$.

	By Theorem \ref{thm-mgale},  there exists a finite eigenvalue $\psi_{\theta}$ for each fixed $\theta\in\mathbb{R}$, so $D_{\psi}=\mathbb{R}$.  Since $\psi_0=0$, the origin is in the interior of $D_{\psi}$. 			
	
		Since $\psi_\theta$ is the eigenvalue associated with a unique positive eigenfunction of  $\mathcal{L}+\theta f$, its differentiability with respect to $\theta$ follows by use of the implicit function theorem (c.f. \cite{FSS87} p.2, \cite{KM03} Prop 4.8).
	
	Let $D_{\psi,c}=\{\theta\in\mathbb{R}:\psi_{\theta}\leq c\}$ denote sublevel sets of $\psi_\cdot$ with respect to $\theta$, and let $\theta\in \bar{D}_{\psi,c}$. There exists a sequence $\{\theta_n\}_{n\geq0}\subset D_{\psi,c}$ such that $\theta_n\uparrow\theta$. Then, \[\lim\limits_{n\to\infty}\psi_{\theta_n}=\lim\limits_{n\to\infty}\lim\limits_{t\to\infty}\frac{1}{t}\log\mathbb{E}\left[e^{\theta_n\Lambda(t)}\right]=\lim\limits_{t\to\infty}\frac{1}{t}\log\mathbb{E}\left[e^{\theta\Lambda(t)}\right]=\psi_{\theta}\leq c,\] where the interchange of limits and the passage of the limit through the expectation follows from the monotone convergence theorem and monotonicity in $n$. Then $\theta\in D_{\psi,c}$ which implies $\bar{D}_{\psi,c}\subset D_{\psi,c}$. As all sublevel sets of $\psi_\theta$ are closed, we have that $\psi_{\theta}$ is lower semicontinuous in $\theta$. 
	\end{proof}\\
	
	\begin{remark}
		Our main goal was characterizing the logarithmic moment generating function $\psi_\theta$ by solving a boundary value PIDE, from which 
		we can also obtain the long term mean and variance for $\Lambda$. From the martingale $M_\theta$ \eqref{eq-mgale} we have
		 \begin{equation}\label{eq-mgaleE}\mathbb{E}\left[e^{\theta\Lambda(t)} u_{\theta}(V(t))\right]=e^{\psi_{\theta}t}u_{\theta}(V(0))\end{equation}
	Assuming $u_\theta$ is $C^2$ as a function of $\theta$, taking derivatives with respect to $\theta$, using dominated convergence gives
	\begin{align*}
			\frac{1}{te^{\psi_\theta t}u_{\theta}(V(0))}\mathbb{E}\left[\Lambda(t)e^{\theta\Lambda(t)}u_{\theta}(V(t))+e^{\theta\Lambda(t)}\frac{d}{d\theta}u_{\theta}(V(t))\right]=\frac{d\psi_{\theta}}{d\theta}+\frac{1}{t}\frac{\frac{d}{d\theta}u_{\theta}(V(0))}{u_{\theta}(V(0))}
		\end{align*} 
		Evaluating at $\theta = 0$, and using the fact that at $\theta = 0$ $(u_0(\cdot),\psi_0)\equiv (1,0)$ solves the PIDE (\ref{eq-PIDE})-(\ref{eq-PIDEbdry}), we get
	\begin{align*}
			\frac{1}{t}\mathbb{E}\left[\Lambda(t)+\frac{d}{d\theta}u_{\theta}(V(t))\big|_{\theta=0}\right]=\frac{d\psi_{\theta}}{d\theta}\big|_{\theta=0}+\frac{1}{t}{\frac{d}{d\theta}u_{\theta}(V(0))\big|_{\theta=0}}
		\end{align*} 	
Taking the limit as $t\to\infty$, one gets for the long term mean of $\Lambda$ that
		\begin{align*}
		\lim\limits_{t\to\infty}\frac{1}{t}\mathbb{E}\left[\Lambda(t)\right]=\frac{d\psi_{\theta}}{d\theta}\big|_{\theta=0}.
		\end{align*}
		Taking second derivatives in \eqref{eq-mgaleE} with respect to $\theta$, one similarly gets for the long term variance of $\Lambda$ that
				\begin{align*}
		\lim\limits_{t\to\infty}\frac{1}{t}\mathbb{V}\left[\Lambda(t)\right]=\frac{d^2\psi_{\theta}}{d\theta^2}\big|_{\theta=0}.
		\end{align*}
	\end{remark}

\subsection{One dimensional reflected jump-diffusion}

In the case of a jump-diffsuion in one dimension, much of the general theory simplifies to more explicit formulae. In particular, there is only one possible direction of reflection at each boundary, and there is an explicit formula for the SP mapping taking $X$ to $(V,L)$ that provides a direct construction of the reflected process $V$. 

Without loss of generality let us assume $S=[0,b]$ for some $b<\infty$. Since $L$ is a process on $\partial S=\{0,b\}$ we can decompose $L=L_0-L_b$ where we define $L_0$ as the associated boundary process at $0$ and $L_b$ as the associated boundary process at $b$, in the sense that, for all $t>0$:
\begin{align} \label{eq-SP1d}
\int_0^t \!\!\!\!\mathbf 1_{\{V(s)>0\}}dL_0(s)=0, \;\; \int_0^t \!\!\!\!\mathbf 1_{\{V(s)<b\}}dL_b(s)=0.
\end{align} 
The explicit formula for $(V,L)$  was constructed in (\cite{KLRS07, KLRS08}) providing $V(t)=\mathcal L_{[0,b]}(X)(t)$ via the following Skorokhod map  
\begin{equation}\label{eq-SM1dV}
\mathcal L_{[0,b]}(X)(t):=X(t)-\bigg[(X(0)-b)^+\land \inf_{0\le u\le t}X(u) \bigg] \lor \sup_{0\le s\le t}\bigg[(X(s)-b)\land \inf_{s\le u\le t}X(u) \bigg]
\end{equation}
using the notation $(x-b)^+=(x-b)\lor 0$.
It was also shown in  (\cite{KLRS07}) that the associated boundary processes satisfy
\begin{equation}\label{eq-SM1dL}
L_0(t)=\sup_{0\le s\le t}(L_b(s)-X(s))^+,\;\, L_b(t)=\sup_{0\le s\le t}(X(s)+L_0(s)-b)^+,
\end{equation}
and that this formula with $V=X+L_0-L_b$ is equivalent to the one in \eqref{eq-SP1d}.

With $S=[0,b]$, the additive functional becomes
\begin{equation}\label{eq-L1}
\Lambda(t)=\int_0^t f(V(s))ds +f(0) L^c_0(t)+f(b) L^c_b(t)
\end{equation}
where $L_0^c$ and $L_b^c$ are the continuous parts of the increasing processes $L_0$ and $L_b$ respectively.
We can recover them by using continuous approximations to the function $f=\mathbf{1}_{0}$ to get $\Lambda(t)=L^c_0(t)$; and continuous approximations to $f=\mathbf{1}_{b}$ to get $\Lambda(t)=L^c_b(t)$.

By Theorem \ref{thm-mgale} we have that $M_{\theta}(t)=e^{\theta \Lambda(t)-\psi_{\theta}t}$ is a martingale, and for each $\theta\in\mathbb R$ there exists a unique positive function $u_{\theta}(x)$  and constant $\psi_{\theta}$ satisfying the PIDE
\begin{align}\label{eq-PIDEd1}
\partial_{x}u_{\theta}(x)\mu(x)+\frac12 &\partial^2_{xx}u_{\theta}( x)\sigma^2(x)+\int_{\mathcal{M}}[u_{\theta}(r(x,y))-u_{\theta}(x)]\nu_x(dy)+u_{\theta}(x)(\theta f(x)-\psi_{\theta})=0
\end{align}
subject to the boundary conditions, if $L^c \not\equiv  0$
\begin{align}\label{eq-PIDEbdryd1}
\theta u_{\theta}(0)f(0)+\rho_0\partial_{x}u_{\theta}(0)=0, \; \; \theta u_{\theta}(b)f(b)-\rho_b\partial_{x}u_{\theta}(b)=0,
\end{align}
Without the jump part, the resulting PDE would be more straightforward to solve numerically. We construct a numerical approximation scheme for $u_{\theta}(x)$ based on the PIDE \eqref{eq-PIDEd1}-\eqref{eq-PIDEbdryd1} that allows for jumps (c.f. Appendix for full details), and test it on two examples of PIDEs whose solution we can derive analytically.

\section{Examples}The partial integro-differential equation \eqref{eq-PIDE}-\eqref{eq-PIDEbdry} for the pair $(u_{\theta}(x),\psi_{\theta})$ can be solved analytically only in a few special cases; even then the expression for $\psi$ is implicit. In general, one has to use numerical methods to solve this PIDE. We next provide a numerical scheme to approximate its solution using $d=1$ and $S=[0,b]$. 

Fix $\theta\in\mathbb{R}$, and let $\tilde{\mathcal{L}}$ be the second order linear operator defined  by $\tilde{\mathcal{L}}u_\theta(x)-\psi_{\theta}u_{\theta}(x)=0$ in (\ref{eq-PIDEd1}) on the subset of $C^2([0,b])$ functions $D(\tilde{\mathcal{L}})$ defined by the boundary constraint (\ref{eq-PIDEbdryd1}). Since the limit of the logarithmic moment generating function  is the eigenvalue $\psi_{\theta}$ of the operator $\tilde{\mathcal{L}}$ with corresponding eigenfunction $u_{\theta}(\cdot)$, we will numerically solve the eigenvalue problem $\tilde{\mathcal{L}}u_{\theta}(x)=\psi_{\theta}u_{\theta}(x)$. We subdivide $\bar S=[0,b]$ into $N+1$ equal sub-intervals and approximate $\tilde{\mathcal{L}}$ as a matrix. We treat the derivatives by finite differences and the integral by composite trapezoidal quadrature on intervals with continuous support and as a weighted sum on intervals with discrete support. The boundary conditions are approximated by forward and backward finite-difference schemes and substituted into the matrix where appropriate. Further details on the numerical method are contained in the Section 4 appendix.

We now apply this numerical scheme  in two special cases for which we can also derive analytical implicit solutions to the limiting cumulant generating function (Sections  \ref{RefHomDiff}-\ref{RefBD}), in order to test our numerical scheme. We then apply the numerical estimation to our application of interest: a biochemical reaction model and its jump-diffusion approximation (Section \ref{example-biochem}).   

\subsection{Reflected Brownian motion with drift}\label{RefHomDiff}
Let $X$ be a standard Brownian motion in $d=1$ on $[0,b]$ with drift $\mu$, diffusion $\sigma^2$ (and $\nu\equiv 0$) and $V$ be this process normally reflected at the boundaries. Let $\Lambda=L^c_0$ be the local time of X at $0$ (so $f=$ continuised version of $\mathbf{1}_{0}$ as described below (\ref{eq-lc})). Then \eqref{eq-PIDEd1}-\eqref{eq-PIDEbdryd1} becomes the partial differential equation 
with boundary constraints \begin{align*}
\frac12 \sigma^2 \partial^2_{xx}u_{\theta}(x)+\mu \partial_xu_{\theta}(x)-\psi_{\theta}u_{\theta}(x)&=0\\
\theta u_{\theta}(0)+\partial_x u_{\theta}(0)=0, \; u_{\theta}(b)=1,\; \partial_xu_{\theta}(b)&=0.
\end{align*}
(as $u_\theta$ is unique only up to a constant, we are allowed to chose its value at $x=b$).
When the characteristic polynomial  $\beta^2+\frac{2\mu}{\sigma^2}\beta-\frac{2\psi_{\theta}}{\sigma^2}=0$ has repeated roots, $\psi_{\theta}=-\mu^2/2\sigma^2$, then 
\begin{align*}
u_{\theta}(x)=e^{\frac{\mu(b-x)}{\sigma^2}}\left(\frac{-b\mu+\sigma^2+\mu x}{\sigma^2}\right),\;
\theta=-\frac{\mu^2 b}{\sigma^2(\sigma^2-b\mu)}.
\end{align*}
When the polynomial has real roots, $\psi_{\theta}>-\mu^2/2\sigma^2$, then for $\alpha=\sqrt{\mu^2+2\sigma^2\psi_{\theta}}$
\begin{align*}
u_{\theta}(x)=e^{\frac{b(\mu-\alpha)-x(\alpha+\mu)}{\sigma^2}}\left(\frac{(\alpha-\mu)e^{\frac{2\alpha b}{\sigma^2}}+(\alpha+\mu)e^{\frac{2\alpha x}{\sigma^2}}}{2\alpha}\right),\;
\theta=\frac{2\sigma^2\psi_{\theta}e^{\frac{2\alpha b}{\sigma^2}}}{\sigma^2((\alpha-\mu)e^{\frac{2\alpha b}{\sigma^2}}+(\alpha+\mu))}.
\end{align*}
When the polynomial has complex roots, $\psi_{\theta}<-\mu^2/2\sigma^2$, then for $\alpha=\sqrt{-(\mu^2+2\sigma^2\psi_{\theta})}$
\begin{align*}
u_{\theta}(x)=e^{\frac{\mu(b-x)}{\sigma^2}}\left(\frac{\alpha\cos\left(\frac{\alpha(b-x)}{\sigma^2}\right)-\mu\sin\left(\frac{\alpha(b-x)}{\sigma^2}\right)}{\alpha}\right),\;
\theta=\frac{2\sigma^2\psi_{\theta}\sin(\frac{\alpha b}{\sigma^2})}{\sigma^2(\mu \sin(\frac{\alpha b}{\sigma^2})-\alpha\cos(\frac{\alpha b}{\sigma^2}))}.
\end{align*}
In the special case of $\mu=0,\sigma^2=1$:   $\psi_{\theta}$ is given implicitly by $\theta=\sqrt{2\psi_{\theta}}\tanh(b\sqrt{2\psi_{\theta}})$ with $\psi_0=0$. A similar result was obtained by other analytic methods in (\cite{FKZ15}).

For the numerics, we set $b=1$ and approximate $f$ by using continuous linear piecewise  functions $f(x) = (1-(N+1)x)\textbf{1}_{\{x<\frac{1}{N+1}\}}$ on $[0,1]$, for the choice of mesh steps $N$ to be defined. Figure \ref{fig:numericsincrnbm} shows the convergence of the proposed numerical scheme to the analytical solution as the number of mesh steps are increased from $N=10$ to $N=110$ for various values of $\theta$. Table \ref{rhbm} compares the analytical result of $\psi_{\theta}$ against its numerical approximation, $\hat{\psi_{\theta}}$ for $\theta$ close to $0$. In this approximation the interval $S=[0,1]$ is subdivided $N+1=1001$ sub-intervals.

\begin{figure}[H]
	\begin{floatrow}
		\ffigbox{
			{\includegraphics[width=0.95\linewidth]{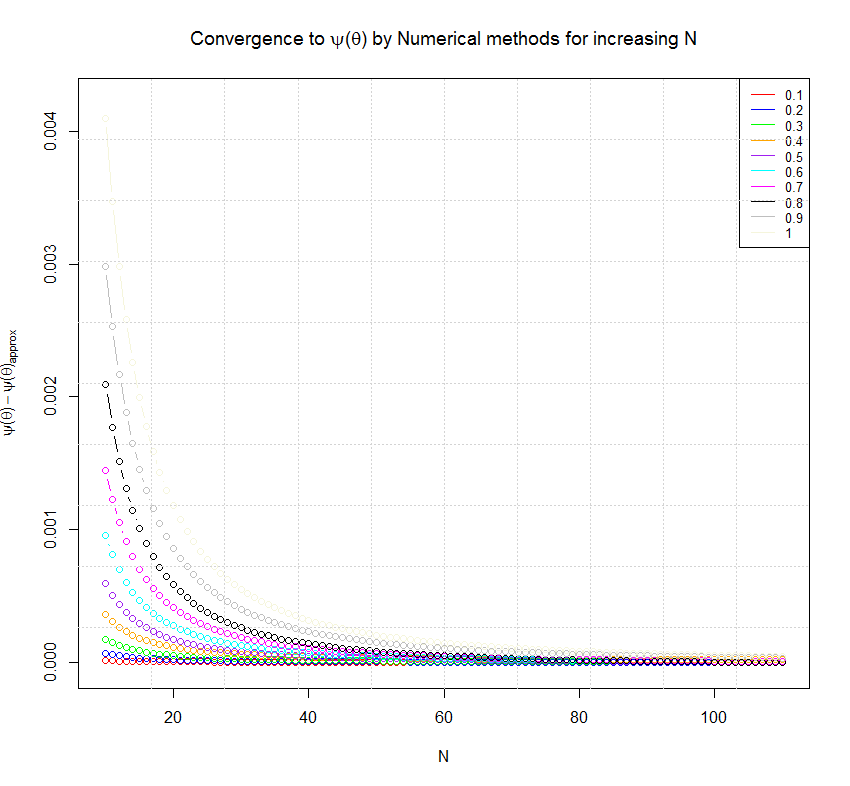} }
		}{
			\caption{Convergence of $\hat{\psi_{\theta}}$ for $\theta\in\{\frac{i}{10}\}_{1}^{10}$ as\\  $\{N\}_{10}^{110}$ increases for a reflected standard BM.}\label{fig:numericsincrnbm}
		}
		\capbtabbox{
			\begin{tabular}{|c|c|c|c|c|}
				\hline
				$\theta$&$\psi_{\theta}$& $\hat{\psi_{\theta}}$ & $|\psi_{\theta}-\hat{\psi_{\theta}}|$  \\
				\hline
				$0$& $0$ & $3.761\times10^{-10}$ & $3.761\times10^{-10}$  \\
				\hline
				$0.001$&$5.003\times10^{-4}$ &$5.002\times10^{-4}$  & $9.816\times10^{-8}$ \\
				\hline
				$0.002$&$1.001\times10^{-3}$ & $1.001\times10^{-3}$ &$3.942\times10^{-7}$  \\
				\hline
				$0.003$&$1.502\times10^{-3}$ & $1.502\times10^{-3}$ & $8.859\times10^{-7}$  \\
				\hline
				$0.004$& $2.004\times10^{-3}$&$2.003\times10^{-3}$  &$1.573\times10^{-6}$    \\
				\hline
				$0.005$&$2.507\times10^{-3}$ &$2.504\times10^{-3}$  & $2.457\times10^{-6}$  \\
				\hline
				$0.006$&$3.010\times10^{-3}$ &$3.006\times10^{-3}$  & $3.536\times10^{-6}$  \\
				\hline
				$0.007$&$3.513\times10^{-3}$ &$3.508\times10^{-3}$  & $4.809\times10^{-6}$  \\
				\hline
				$0.008$&$4.017\times10^{-3}$ &$4.011\times10^{-3}$  & $6.277\times10^{-6}$  \\
				\hline
				$0.009$& $4.521\times10^{-3}$& $4.514\times10^{-3}$ &$7.937\times10^{-6}$   \\
				\hline
				$0.01$&$5.027\times10^{-3}$ &$5.017\times10^{-3}$  & $9.792\times10^{-6}$ \\
				\hline
			\end{tabular}\vspace{0.85cm}
		}{
			\caption{Comparison between $\psi_{\theta}$ and $\hat{\psi_{\theta}}$ when $\theta$ is near 0 for a reflected standard BM.}\label{rhbm}
		}
	\end{floatrow}
\end{figure}

\subsection{Reflected birth-death process}\label{RefBD}
Let $X$ be a pure birth-death process in $d=1$ on $S=\{0,1,2,...,b\}$ with overall jump rate $\lambda$, so $\nu_x(\pm 1)=\frac\lambda 2$ (and $\mu=\sigma\equiv 0$) and $V$ be this process but it reflects on itself at the boundaries: $\nu_x(\pm 1)=\frac\lambda 2$ for $x\in\{1,\dots,b-1\}$ and $\nu_0(+1)=\nu_0(0)=\frac\lambda 2, \nu_b(-1)=\nu_b(0)=\frac\lambda 2$. Let $\Lambda=\int_0^t \mathbf{1}_{V_s\in [0,1)}ds$ and $f(x) = \textbf{1}_{0\le x<1-\frac{1}{N+1}}+(N+1)(1-x)\textbf{1}_{\{1-\frac{1}{N+1}\le x<1\}}$ approximating the function $\mathbf{1}_{ [0,1)}$. Then  \eqref{eq-PIDEd1}-\eqref{eq-PIDEbdryd1} becomes the recurrence relation equation
\begin{align*}
-(\lambda+\psi_{\theta}-\theta f(x))u_{\theta}(x) +\frac\lambda 2\left(u_{\theta}(r(x,1))\right)+\frac\lambda 2\left(u_{\theta}(r(x,-1))\right)=0\quad\\
\Longleftrightarrow\quad
\left\{\begin{array}{ll}
-(\frac \lambda 2+\psi_{\theta}-\theta)u_{\theta}(0)+\frac\lambda 2u_{\theta}(1)=0,~~~&x=0\\
-(\lambda+\psi_{\theta})u_{\theta}(x)+\frac\lambda 2u_{\theta}(x+1)+\frac\lambda 2u_{\theta}(x-1)=0,~~~&x=\{1,2,...,b-1\}\\
-(\frac \lambda 2+\psi_{\theta})u_{\theta}(b)+\frac\lambda 2u_{\theta}(b-1)=0,~~~&x=b.\\
\end{array}\right.
\end{align*}
with sole constraint $u_{\theta}(b)=1$ ($L^c\equiv 0$, since $X$ is a pure jump process).

The recurrence equation on $x=\{1,2,...,b-1\}$ is a linear difference equation and may be solved by standard methods. Namely, let $u_{\theta}(x)=\beta^x$ then the characteristic equation is
\begin{align*}
-(\lambda+\psi_{\theta})\beta^x+\frac{\lambda}{2}\beta^{x+1}+\frac{\lambda}{2}\beta^{x-1}=0 \iff\beta^2-\frac{2}{\lambda}(\lambda+\psi_{\theta})\beta+1=0.
\end{align*}
Solving for $\beta$, we have $\beta = \frac{1}{\lambda}\left((\lambda+\psi_\theta)\pm\sqrt{\psi_{\theta}(2\lambda+\psi_{\theta})}\right).$
Consequently,
\begin{align*}
u_{\theta}(x)=\begin{cases}
&C_1\left(\frac{1}{\lambda}(\lambda+\psi_{\theta})\right)^x+C_2x\left(\frac{1}{\lambda}(\lambda+\psi_{\theta})\right)^x,~~~\psi_{\theta}\in\{0,-2\lambda\}\\
&C_1\left(\frac{1}{\lambda}\left((\lambda+\psi_\theta)+\sqrt{\psi_{\theta}(2\lambda+\psi_{\theta})}\right)\right)^x+C_2\left(\frac{1}{\lambda}\left((\lambda+\psi_\theta)-\sqrt{\psi_{\theta}(2\lambda+\psi_{\theta})}\right)\right)^x~~~\text{otherwise.}
\end{cases}
\end{align*}
Using the boundary condition $u_{\theta}(b)=1$, we can derive from the third recurrence relation that
$u_{\theta}(b-1)=1+\frac{2}{\lambda}\psi_{\theta},$ and using this in the second recurrence relation with $x=b-1$, we have $u_{\theta}(b-2)= 1 + \frac{6}{\lambda}\psi_{\theta}+\frac{4}{\lambda^2}\psi_{\theta}^2.$ Now we may solve for $C_1$ and $C_2$ to find that
\begin{align*}
u_{\theta}(x)=\begin{cases}
&\frac{1}{\lambda^3}\left(\frac{\lambda+\psi_{\theta}}{\lambda}\right)^{1-b+x}\left(\lambda^3+(5b-3-5x)\lambda^2\psi_{\theta}+10(b-1-x)\lambda\psi_{\theta}^2+4(b-1-x)\psi_{\theta}^3\right),~~~\psi_{\theta}\in\{0,-2\lambda\}\\
&\frac{1}{2}\left( \left(\frac{\lambda+\psi_{\theta}-\sqrt{\psi_{\theta}(2\lambda+\psi_{\theta})}}{\lambda}\right)^{x-b}
+ \frac{\psi_{\theta}\left( \frac{\lambda+\psi_{\theta}-\sqrt{\psi_{\theta}(2\lambda+\psi_{\theta})}}{\lambda} \right)^{x-b}}{\sqrt{\psi_{\theta}(2\lambda+\psi_{\theta})}}\right.\\ 
&\left.~~~~~+\left(\frac{\lambda+\psi_{\theta}+\sqrt{\psi_{\theta}(2\lambda+\psi_{\theta})}}{\lambda}\right)^{x-b}
+\frac{\psi_{\theta}\left( \frac{\lambda+\psi_{\theta}+\sqrt{\psi_{\theta}(2\lambda+\psi_{\theta})}}{\lambda} \right)^{x-b}}{\sqrt{\psi_{\theta}(2\lambda+\psi_{\theta})}}  \right),~~~\text{otherwise.}
\end{cases}
\end{align*}

We equate the first recurrence equation to the second recurrence equation when $x=1$ to derive the identity
\begin{align*}
u_{\theta}(0)=\frac{\frac{\lambda}{2}}{\frac{\lambda}{2}+\psi_{\theta}-\theta}u_{\theta}(1)=\frac{2}{\lambda}(\lambda+\psi_{\theta})u_{\theta}(1)-u_{\theta}(2),
\end{align*}
which can now be used to solve for $\psi_{\theta}$ implicitly. Namely for $\psi_{\theta}\not\in\{0,-2\lambda\}$, $\theta=\frac{A}{B}$ where  
\begin{align*}
A =& \psi_{\theta}\left(\lambda\left(\left(\frac{-\sqrt{\psi_{\theta}(2\lambda+\psi_{\theta})}+\lambda+\psi_{\theta}}{\lambda}\right)^b-\left(\frac{\sqrt{\psi_{\theta}(2\lambda+\psi_{\theta})}+\lambda+\psi_{\theta}}{\lambda}\right)^b\right)\right.\\
&\left.+\psi_{\theta}\left(\left(\frac{-\sqrt{\psi_{\theta}(2\lambda+\psi_{\theta})}+\lambda+\psi_{\theta}}{\lambda}\right)^b-\left(\frac{\sqrt{\psi_{\theta}(2\lambda+\psi_{\theta})}+\lambda+\psi_{\theta}}{\lambda}\right)^b\right)\right.\\
&\left.-\sqrt{\psi_{\theta}(2\lambda+\psi_{\theta})}\left(\left(\frac{-\sqrt{\psi_{\theta}(2\lambda+\psi_{\theta})}+\lambda+\psi_{\theta}}{\lambda}\right)^b+\left(\frac{\sqrt{\psi_{\theta}(2\lambda+\psi_{\theta})}+\lambda+\psi_{\theta}}{\lambda}\right)^b\right)\right),\\
B =& \psi_{\theta}\left(\left(\frac{-\sqrt{\psi_{\theta}(2\lambda+\psi_{\theta})}+\lambda+\psi_{\theta}}{\lambda}\right)^b-\left(\frac{\sqrt{\psi_{\theta}(2\lambda+\psi_{\theta})}+\lambda+\psi_{\theta}}{\lambda}\right)^b\right)\\
&-\sqrt{\psi_{\theta}(2\lambda+\psi_{\theta})}\left(\left(\frac{-\sqrt{\psi_{\theta}(2\lambda+\psi_{\theta})}+\lambda+\psi_{\theta}}{\lambda}\right)^b+\left(\frac{\sqrt{\psi_{\theta}(2\lambda+\psi_{\theta})}+\lambda+\psi_{\theta}}{\lambda}\right)^b\right).
\end{align*}

For the purposes of the numerical approximations, we will arbitrarily set $\lambda=50$ and $b=3$. With this choice, $\psi_{\theta}$ is given implicitly by $\theta=\frac{\psi_{\theta}(62500+6250\psi_{\theta}+150\psi_{\theta}^2+\psi_{\theta}^3)}{15625+3750\psi_{\theta}+125\psi_{\theta}^2+\psi_{\theta}^3}$. Similarly, using $u_{\theta}(x)$ when $\psi_{\theta}\in\{0,-2\lambda\}$, we deduce that with this choice of $\lambda$ and $b$ we have that $\psi_0=0$ and $\psi_{-50}=-100$. Given that $S$ is discrete, numerical approximations of $\psi_{\theta}$, $\hat\psi_\theta$, can be made by solving for the largest real eigenvalue of the matrix
\[
\begin{bmatrix}
-\frac{50}{2}+\theta & \frac{50}{2} & 0 & 0 \\
\frac{50}{2} & -50 & \frac{50}{2} & 0 \\
0 & \frac{50}{2} & -50 & \frac{50}{2} \\
0 & 0 & \frac{50}{2} & -\frac{50}{2}
\end{bmatrix}
\]
for each $\theta$. Table \ref{rbd} compares the analytical result of $\psi_{\theta}$ against its numerical approximation, $\hat{\psi_{\theta}}$, for various values of $\theta$ close to $0$. 

\begin{figure}[H]
		\capbtabbox{
			\begin{tabular}{|c|c|c|c|c|}
				\hline
				$\theta$&$\psi_{\theta}$& $\hat{\psi_{\theta}}$ & $|\psi_{\theta}-\hat{\psi_{\theta}}|$  \\
				\hline
				$0$& $0$ & $-6.476\times10^{-317}$ & $6.476\times10^{-317}$  \\
				\hline
				$0.001$&$2.503\times10^{-4}$ &$2.500\times10^{-4}$  & $3.410\times10^{-7}$ \\
				\hline
				$0.002$&$5.007\times10^{-4}$ & $5.000\times10^{-4}$ &$6.646\times10^{-7}$  \\
				\hline
				$0.003$&$7.510\times10^{-4}$ & $7.501\times10^{-4}$ & $9.706\times10^{-7}$  \\
				\hline
				$0.004$& $1.001\times10^{-3}$&$1.000\times10^{-3}$  &$1.259\times10^{-6}$    \\
				\hline
				$0.005$&$1.252\times10^{-3}$ &$1.250\times10^{-3}$  & $1.530\times10^{-6}$  \\
				\hline
				$0.006$&$1.502\times10^{-3}$ &$1.500\times10^{-3}$  & $1.784\times10^{-6}$  \\
				\hline
				$0.007$&$1.752\times10^{-3}$ &$1.750\times10^{-3}$  & $2.020\times10^{-6}$  \\
				\hline
				$0.008$&$2.003\times10^{-3}$ &$2.001\times10^{-3}$  & $2.238\times10^{-6}$  \\
				\hline
				$0.009$& $2.253\times10^{-3}$& $2.251\times10^{-3}$ &$2.439\times10^{-6}$   \\
				\hline
				$0.01$&$2.503\times10^{-3}$ &$2.501\times10^{-3}$  & $2.623\times10^{-6}$ \\
				\hline
			\end{tabular}\vspace{0.85cm}
		}{
			\caption{Comparison between $\psi_{\theta}$ and $\hat{\psi_{\theta}}$ when $\theta$ is near 0 for a reflected birth-death process.}\label{rbd}
		}
\end{figure}

\subsection{Biochemical reaction model}\label{example-biochem}
Our main motivating example comes from jump-diffusion approximation of a biochemical reaction model. The full model is a pure Markov jump process tracking the amount $X_A(t)$ and $X_B(t)$ of molecular species $A$ and $B$ within a cell that also undergoes cellular growth and division. The simplified representation of reactions between $A$ and $B$ is:
\begin{align*} 
A   & \stackrel{\tilde\kappa^{10}_{-1}}{\to} B \\
B    & \stackrel{\tilde\kappa^{01}_{1}}{\to} A \\
A+B  & \stackrel{\tilde\kappa^{11}_{-1}}{\to} 2B \\
2A+B & \stackrel{\tilde\kappa^{21}_{1}}{\to} 3A 
\end{align*}
where all external factors are captured by reaction constants $\tilde\kappa^{ij}_k$. At the time of cellular division an assignment of one half of doubled molecules results in a Bernoulli($\frac12$) random error of $\,\pm 1$ in the amount of species $A$ compensated by species $B$, occurring at a rate that is proportional to their product and a division constant $\tilde\gamma$. The jump Markov process for the evolution of this system has the generator:
\begin{align*}\mathcal{G}g(x_A,y_B)&=\tilde\kappa^{10}_{-1}x_A[g(x_A-1,y_B+1)-g(x_A,y_B)]+\tilde\kappa^{01}_{1}y_B[g(x_A+1,y_B-1)-g(x_A,y_B)]\\&+\tilde\kappa^{11}_{-1}x_Ay_B[g(x_A-1,y_B+1)-g(x_A,y_B)]+\tilde\kappa^{21}_{1}x_A^2y_B[g(x_A+1,y_B-1)-g(x_A,y_B)]\\&+\frac12\tilde\gamma x_Ay_B[g(x_A-1,y_B+1)-g(x_A,y_B)]+\frac12\tilde\gamma x_Ay_B[g(x_A+1,y_B-1)-g(x_A,y_B)]\end{align*}
for $g\in\mathcal C(\mathbb{N}\times \mathbb{N})$. This reaction and division dynamics has two relevant features:

(1) There is a conservation law in the total sum of species $A$ and $B$, and letting $n$ denote the  initial overall total of both species, $X_A=\frac{x_A}{n}, X_B=\frac{y_B}{n}$ denote the proportions (out of $n$) of species $A, B$ respectively, all the reactions preserve the initial total $X_A(0)+X_B(0)=1$ so that $X_B(t)=1-X_A(t), \forall t>0$ reduces the model to $d=1$.
This allows us to express the rates of reactions, which are proportional to the product of source types masses and the chemical reaction constants, where the latter are assumed to scale as $\tilde\kappa=n^{\nu-1}\kappa$ with $\nu$=number of sources in the reaction.  The generator of the process $X_A=\frac{x_A}{n}$ for $g\in\mathcal C([0,1])$ is:
\begin{align*}\mathcal{G}_ng(x)&=n\big(\kappa^{10}_{-1}x+\kappa^{11}_{-1}x(1-x)\big)\big[g(x-\!\frac1n)-g(x)\big]+n\big(\kappa^{01}_{1}(1-x)+\kappa^{21}_{1}x^2(1-x)\big)\big[g(x+\!\frac1n)-g(x)\big]\\&+\frac12\gamma_n x(1-x)\big[g(x-\!\frac1n)-g(x)\big]+\frac12\gamma_n x(1-x)[g(x+\!\frac1n)-g(x)]\end{align*}
showing the overall reaction rates of decreasing and increasing proportions of $A$ as $nr_-(x)=n\kappa_{-1}^{10}x+n\kappa_{-1}^{11}x(1-x)$ and $nr_+(x)=n\kappa_1^{01}(1-x)+n\kappa_1^{21}x^2(1-x)$, respectively. The division rates for both increasing and decreasing proportions of $A$ are $\xi_n(x)=\frac12\gamma_n x(1-x)$, where the relationship of $\gamma_n$ to  $n$ will be explored in the two approximations of the jump Markov chain to follow. 

(2) The long term dynamics exhibits a form of noise induced bistability in the proportion of species $A$, under appropriate assumptions on the constants $\{\kappa^{\cdot\cdot}_\cdot\}$, c.f. ($\star$) below. The rate of change of the mean is: 
\begin{equation}\label{eq-CRNdrift}\frac{d}{dt}E\left[X_A(t)|X_A(t)=x\right]=\mu(x):=-{\kappa}^{10}_{-1} x +{\kappa}^{01}_{1}(1-x) - {\kappa}^{11}_{-1} x(1-x) + {\kappa}^{21}_{1} x^2(1-x),~ x\in[0,1]\end{equation} and since $\mu(x)$ is a cubic, assuming  ($\star$) that it has all real roots  in $[0,1]$, then the dynamics of $E[X_A(t)]$ has two stable equilibria and one unstable equilibrium point creating potential barriers on either side of the domain of attraction of the two equilibria. Freidlin-Wentzel theory (\cite{FW98} Ch 6) for path properties of Markov processes with $O(n)$ rates and $O(\frac1n)$ jump sizes  imply that this process will spend most of its time in the stable equilibria with rare transitions between small neighbourhoods around them created by perturbations due to randomness in the system. 
The occupation measure process will reflect this and increasingly concentrate at the deterministic stable points. For more details on sample path properties on finite time intervals of biochemical reaction models with division errors (c.f. Sec 3.3. of \cite{McSP14}).

To estimate the mean local time at the two stable equilibria, as well as the large deviations away from this mean, we numerically solve the PIDE for the limiting cumulant generating function $\psi_{\theta}$, which again reduces to solving for the largest eigenvalue of an $(n+1)\times(n+1)$ matrix.
Set $n=100$, and $\kappa_1^{01}=1,\kappa_1^{21}=\frac{32}{3},\kappa_{-1}^{10}=1,\kappa_{-1}^{11}=\frac{16}{3}$ (for which  ($\star$) is satisfied as the cubic has all real roots), then the two stable equilibria  are $x_1=0.25$ and $x_2=0.75$ on $S=[0,1]$, 
and we will define $f(x)$ as the continuised version of $\textbf{1}_{x\in B_{1/(N+1)}(0.25)\cup B_{1/(N+1)}(0.75)}$, so that the additive functional $\Lambda(t)$ measures the time spent {\bf around} the two stable equilibria.

The jump Markov process parameters are $\nu_x(+\frac{1}{n})=\xi_n(x)+nr_+(x)$ and $\nu_x(-\frac{1}{n})=\xi_n(x)+n r_-(x)$  (and $\mu=\sigma^2\equiv 0$). On the two boundaries the rates of the reaction dynamics and division errors at $x\in\{0,1\}$ have only inward jumps ($r_-(0)= r_+(1)=0$, $\xi_n(0)=\xi_n(1)=0$), so no additional reflection is needed to keep the process within $S=[0,1]$, and $V_n=X^n_A$.

When $\gamma_n= n$, the rate of division errors $\xi_n(x)=\frac12nx(1-x)$ is of the same order as the rate  of reactions. Hence, a rigorous approximation  of $X^n_A(t)$ in terms of $n$ can be made in terms of the constrained Langevin process (c.f. \cite{LW19, AHLW19}) which is a reflected diffusion $V_n$  on $[0,1]$ with small noise 
\begin{align}\label{eq-Vcrn-cle}
	dV_n(t) = \mu(V_n(t))dt +\! \frac{1}{\sqrt{n}}\sqrt{r_+(V_n(t))+r_-(V_n(t))+V_n(t)(1-V_n(t))}dW(t)+\!\frac{1}{\sqrt{n}}\big(\rho_0 dL_0(t)-\rho_1 dL_1(t)\big),
\end{align}
Its drift is equal to  $\mu(x)=-\kappa_{-1}^{10}x+\kappa_1^{01}(1-x)-\kappa_{-1}^{11}x(1-x)+\kappa_1^{21}x^2(1-x)$ from \eqref{eq-CRNdrift} as the division error is unbiased, the diffusion coefficent $\sigma^2(x)=r_+(x)+r_-(x)+x(1-x)$ is equal to the sum of all rates as the square of all jumps are of size $(\pm \frac1n)^2=\frac1{n^2}$, and the reflection directions are $\rho_0 = \rho_1 = 1$ on associated boundary processes $L_0$ and $L_1$ respectively. Large deviation theory for path properties of small noise diffusions (\cite{DZ98} Ch 5, \cite{FW98} Ch 5) also implies this process spends most of its time in the neighbourhood of  stable equilibria with  rare excursions transitioning between them.  The occupation measure of the process will concentrate near the stable equilibria in the long term limit. To estimate the numerical solution for the limiting cumulant generating function $\psi_{\theta}$ of the same additive functional $\Lambda(t)$ as above, we set the jump-diffusion parameters to $\mu(x), \sigma^2(x)$ as above (and $\nu_x\equiv 0$), and we define a sequence of continuous linear piecewise functions $f$ by:
\begin{align}
f(x)=\begin{cases}
0 & 0\leq x < 0.25-\frac{1}{N+1}\\
(N+1)x+1-0.25(N+1) & 0.25-\frac{1}{N+1} \leq x < 0.25\\
-(N+1)x+1+0.25(N+1) & 0.25 \leq x < 0.25+\frac{1}{N+1}\\
0 & 0.25+\frac{1}{N+1} \leq x < 0.75 - \frac{1}{N+1}\\
(N+1)x+1-0.75(N+1) & 0.75 - \frac{1}{N+1} \leq x < 0.75\\
-(N+1)x+1+0.75(N+1) & 0.75 \leq x < 0.75 + \frac{1}{N+1}\\
0 & 0.75 + \frac{1}{N+1} \leq x \leq 1.
\end{cases}\nonumber
\end{align}

We use  $\psi_{\theta,JMP}$ to denote the numerical solution to the PIDE for the limiting cumulant generating function $\psi_{\theta}$ of the jump Markov process $X^n_A(t)$, and $\psi_{\theta,JDA}^N$ for the reflected diffusion approximation $V_n(t)$ in \eqref{eq-Vcrn-cle}, where we use  $n=100$ in the first set of results and $n=1000$ in the second. 
We empirically verify the stability of the numerical approximation in mesh size in Figure \ref{group1}: surfaces (a) and (b) approximate $\psi_{\theta,JDA}^N$ for $0\leq\theta\leq1$, $\{N\}_{50}^{150}$ and $\{N\}_{500}^{1500}$, respectively; the bottom plots (c) and (d) compare the approximations of $\psi_{\theta,JMP}$ (in red) and $\psi_{\theta,JDA}^N$ (in blue), for $n=N=100$ and $n=N=1000$, respectively. 

\noindent To estimate the mean $\psi'(0)$ and variance $\psi''(0)$ of local times for fixed $n=N=1000$ we consider centered finite difference approximations using $\theta=\{-0.01,0,0.01\}$ and obtain:
\begin{align*}
&\psi_{0,JMP}' \approx 1.2\times10^{-3},\quad  \psi_{0,JMP}'' \approx1.6\times10^{-5};\\
&\psi_{0,JDA}' \approx 2.6\times10^{-3},\quad  \psi_{0,JDA}''\approx 5.2\times10^{-5}.
\end{align*}

	\begin{figure}[h]
	\begin{adjustwidth}{-1in}{-1in}
	\centering
	\subfloat[\centering $\psi_{\theta,JDA}^N$ plot for $n=100$, $0\leq\theta\leq1$ as  $\{N\}_{50}^{150}$ increases (reflected diffusion)]{{\includegraphics[scale=0.4]{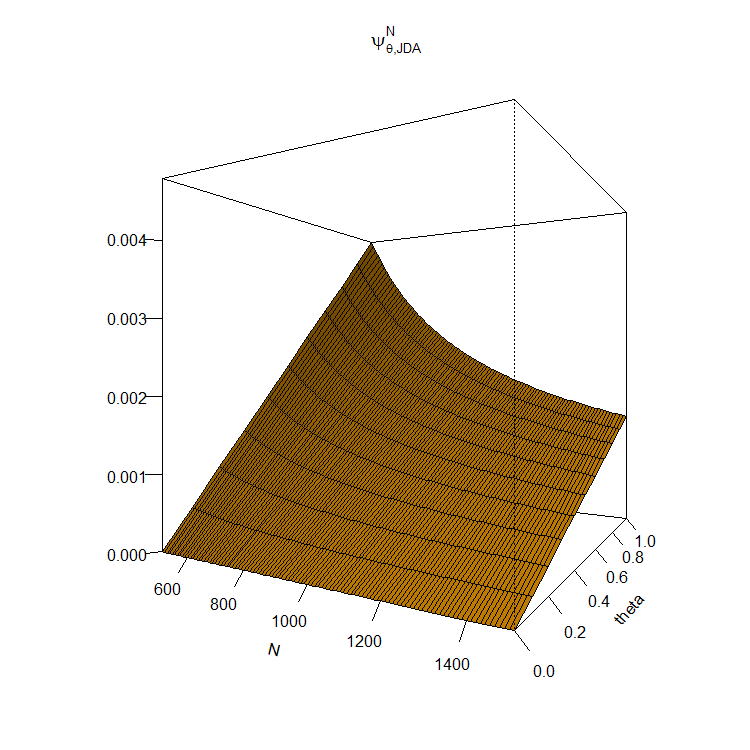} }}
	\quad
	\subfloat[\centering $\psi_{\theta,JDA}^N$ plot for $n=1000$, $0\leq\theta\leq1$ as $\{N\}_{500}^{1500}$ increases (reflected diffusion)]{{\includegraphics[scale=0.4]{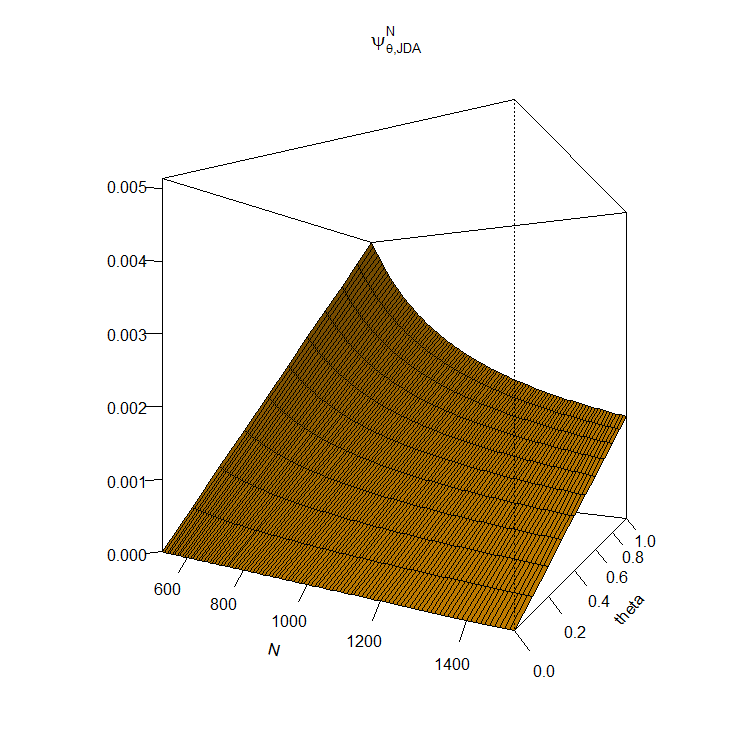} }}
	\quad
	\subfloat[\centering $\psi_{\theta,JMP}$ and $\psi_{\theta,JDA}^N$ plots for $n=N=100$, $0\leq\theta\leq100$.]{{\includegraphics[scale=0.3]{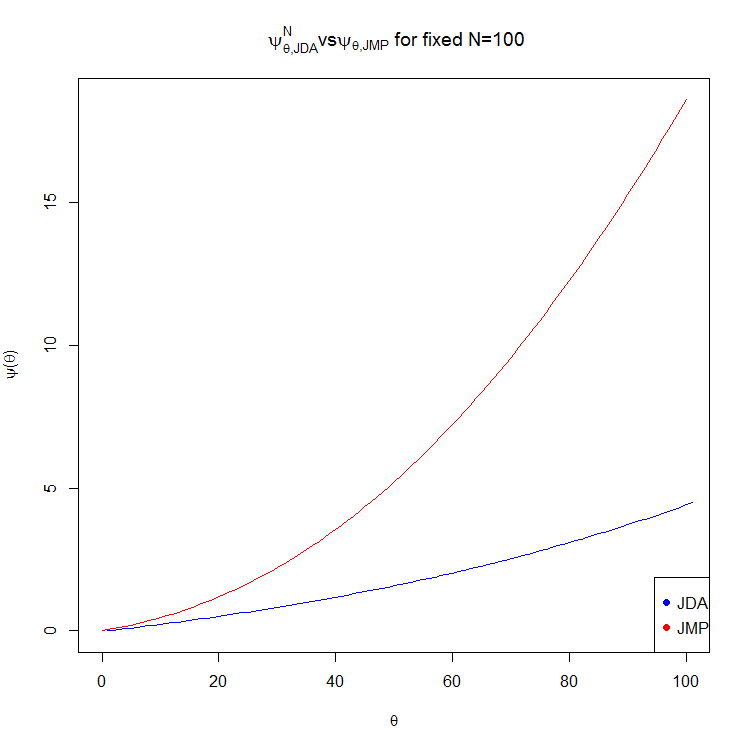} }}
	\quad
	\subfloat[\centering $\psi_{\theta,JMP}$ and $\psi_{\theta,JDA}^N$ plots for $n=N=1000$, $0\leq\theta\leq100$.]{{\includegraphics[scale=0.3]{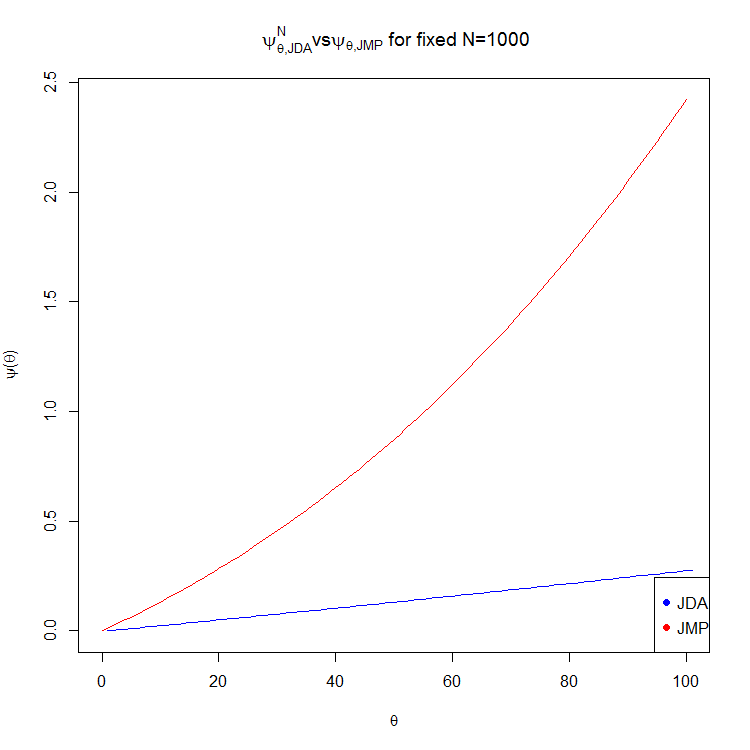} }}
	\caption{Numerical estimate of the limiting cumulant generating function $\psi_{\theta}$ for the long-term local time of $\{0.25,0.75\}$ of the jump Markov process $\psi_{\theta,JMP}$ and of the reflected diffusion $\psi_{\theta,JDA}^N$  (case $\gamma_n=n$).}
	\label{group1}
	\end{adjustwidth}
	\end{figure}

Comparing the results for $n=100$ with $n=1000$ shows the numerical approximation is more stable as the rescaling parameter $n$ increases, since then the magnitude of noise decreases in both processes.
However, the long-term mean local time differs in the two models regardless of the increase in scaling parameter. This is in contrast with the finite time results which say that the paths of the two processes become closer in $n$, but can be reasoned by the fact that the sup-norm of the path difference features a multiplying constant that is a function of the length of the time interval (\cite{K78, KKP14}), and that our results are based on taking limits as the time of integration goes to infinity, and not as the noise size goes to zero.
Our numerical results indicate that the mean of the local time at equilibria are smaller for the jump Markov model than for the reflected diffusion, indicating a tighter long term concentration of the stationary distribution of the reflected JDA at equilibria compared to that of the JMP. This is in full agreement with the results of \cite{McSP14}  Thm~3.1 which say that the functional path large deviation rate of jumps between the two stable equilibria is higher for the jump Markov model than for the reflected diffusion. Since more frequent transitions result in a less concentrated measure our present observation follows.

We next consider the case when division errors occur at higher rate $\gamma_n\gg n$ (e.g.\,$\gamma_n=10n^2$) than the rates of reactions. The $\,\pm \frac1n$ errors from division now dominate the noise and it is more suitable to approximate the model instead by a process in which only the reaction contributions are modeled by a constrained Langevin equation with small noise while the error division contributions are still modeled by pure jumps. The jump-diffusion model in part reduces computation, but also allows a comparison with the approximate diffusion model in the case $\gamma_n=n$. We now define $\tilde V_n$ to be the following  reflected jump-diffusion 
\begin{align}\label{eq-Vcrn-clejump}
		d\tilde V_n(t) &=& \mu(\tilde V_n(t))dt + \frac{1}{\sqrt{n}}\sqrt{r_+(\tilde V_n(t))+r_-(\tilde V_n(t))} dW(t)\pm \frac1n dY^{\xi(\tilde V_n)}_\pm(t)+\frac{1}{\sqrt{n}}\big(\rho_0 dL_0(t)-\rho_1 dL_1(t)\big),
\end{align}
where each $Y^{\xi(\tilde V_n)}_-$ and $Y^{\xi(\tilde V_n)}_+$ are counting processes with intensity measure $\xi_n(\tilde V_n)=\frac12\gamma_n\tilde V_n(1-\tilde V_n)$.
We use $\gamma_n=10n^2$ and present the results for the local time at the same two equilibria $\{0.25,0.75\}$ in Figure \ref{group2}.

\begin{figure}[h]
	\begin{adjustwidth}{-1in}{-1in}
		\centering
		\subfloat[\centering $\psi_{\theta,JDA}^N$ plot for $n=100$, $0\leq\theta\leq1$ as  $\{N\}_{50}^{150}$ increases (reflected jump diffusion)]{{\includegraphics[scale=0.4]{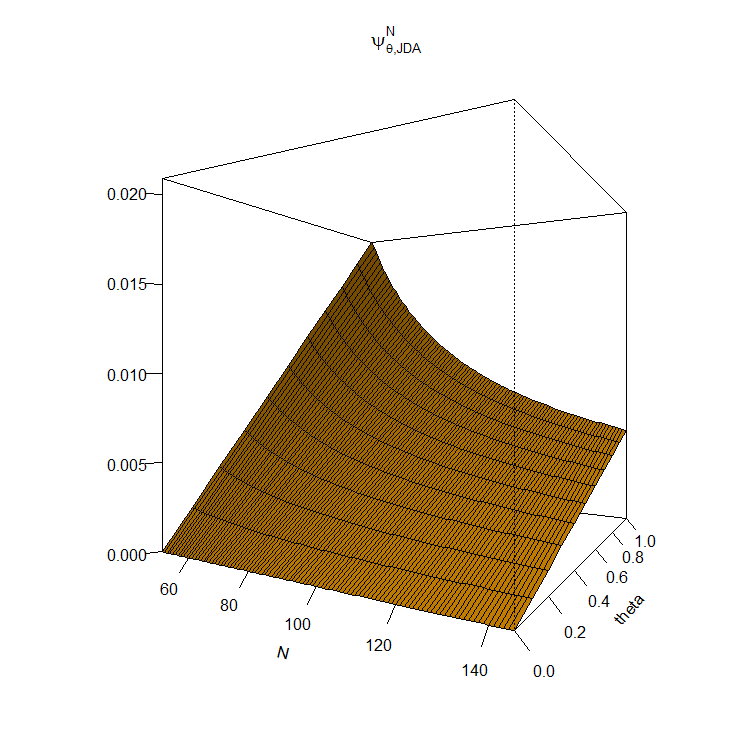} }}
		\quad
		\subfloat[\centering $\psi_{\theta,JDA}^N$ plot for $n=1000$, $0\leq\theta\leq1$ as $\{N\}_{500}^{1500}$ increases (reflected jump diffusion)]{{\includegraphics[scale=0.4]{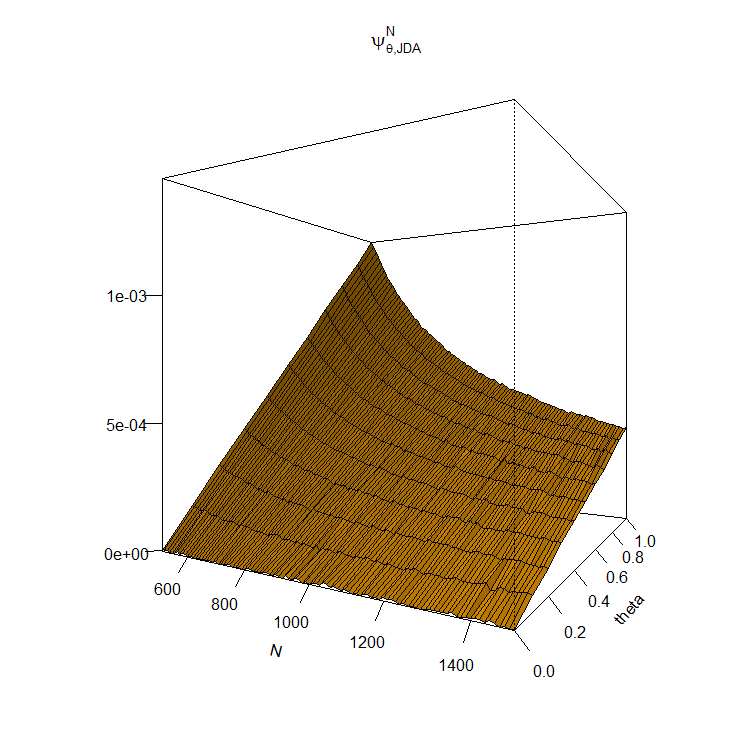} }}
		\quad
		\subfloat[\centering $\psi_{\theta,JMP}$ and $\psi_{\theta,JDA}^N$ plots for $n=N=100$, $0\leq\theta\leq100$.]{{\includegraphics[scale=0.3]{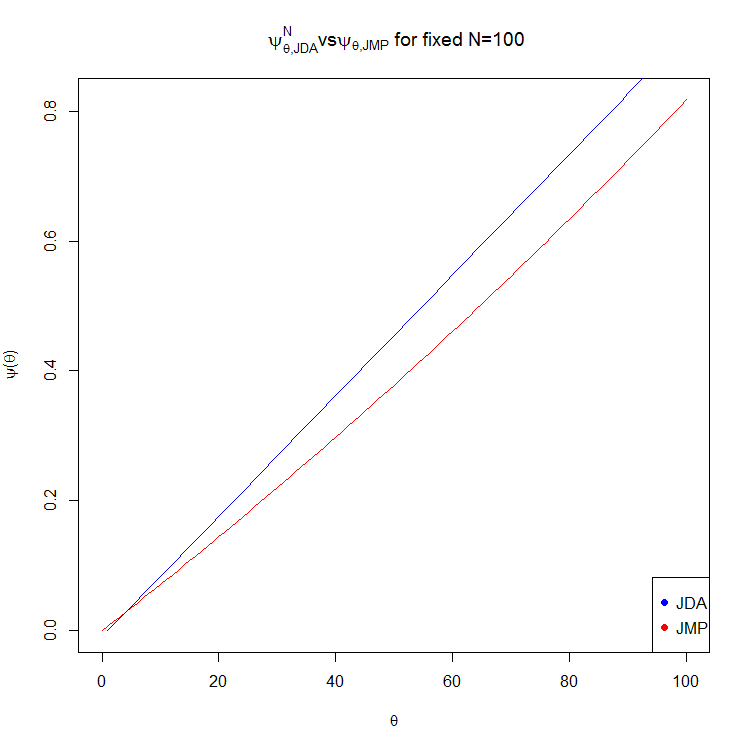} }}
		\quad
		\subfloat[\centering $\psi_{\theta,JMP}$ and $\psi_{\theta,JDA}^N$ plots for $n=N=1000$, $0\leq\theta\leq100$.]{{\includegraphics[scale=0.3]{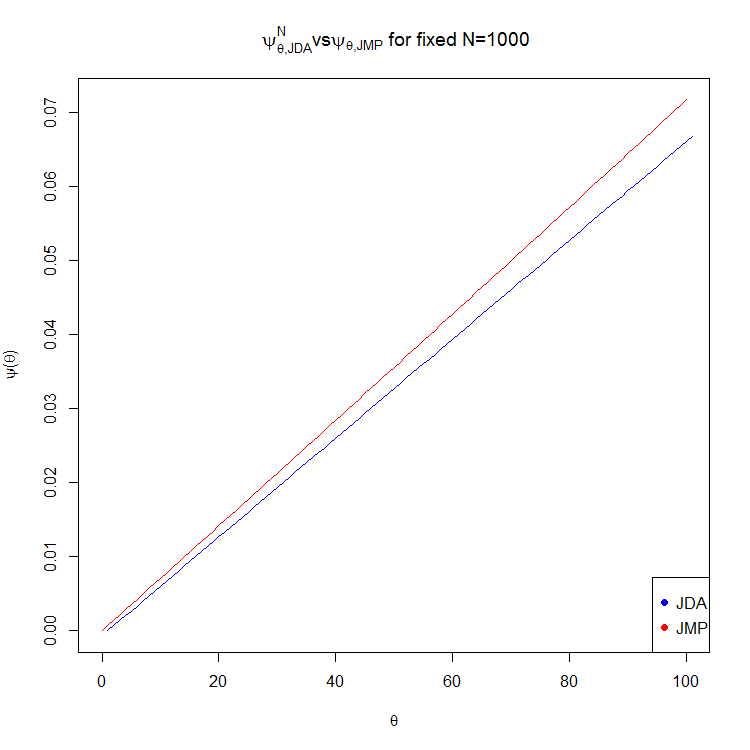} }}
		\caption{Numerical estimates of $\psi_{\theta}$ for the long-term local time of $\{0.25,0.75\}$ of the jump Markov process $\psi_{\theta,JMP}$ and of the reflected jump-diffusion $\psi_{\theta,JDA}^N$ (case $\gamma_n=10n^2$).}
		\label{group2}
	\end{adjustwidth}
\end{figure}  

\noindent Derivatives of $\psi_{\theta}$ near $\theta=0$ inferred from our results for $n=N=1000$ using centered finite differences at points $\theta=\{-0.01,0,0.01\}$:
\begin{align*}
&\psi_{0,JMP}' \approx 7.1\times10^{-3},\quad  \psi_{0,JMP}'' \approx 2.9\times10^{-5};\\
&\psi_{0,JDA}' \approx 9.1\times10^{-3},\quad  \psi_{0,JDA}''\approx 6.9\times10^{-2}
\end{align*}
indicate that in this scenario the reflected jump-diffusion is a closer approximation of the long term behaviour of the original model. This is a consequence of the fact that the dominant noise comes from jumps that are now present in the same (un-approximated) form in the reflected jump-diffusion as specified in the original model.  

Since the magnitude of division error rates is not influenced by the rates of chemical reactions in the system, it is crucial to have a way to distinguish their order of magnitude, which we argue can be done using time-additive functionals (dynamical observables) within experiments. One possible indicator is the local time at the deterministic stable equilibria: when $\gamma_n\gg n$ (e.g.\,$\gamma_n=10n^2$) both the jump Markov model and the reflected jump-diffusion from \eqref{eq-Vcrn-clejump} spend less time near equilibria, as the stronger noise from division errors counteracts the pull towards the stable equilibria of the drift $\mu$ which is defined by the system of reactions (\cite{McSP14}). Since we have numerically obtained the limiting cumulant generating function for the model with $\gamma_n=n$, we can argue that the comparison of the average value of this local time with that of its average value for the model with $\gamma_n=10n^2$ provides a distinction between the two cases.

To establish a quantifiable indicator of distinction between the two cases we also measure the long term {local times} of the jump Markov model and of the reflected jump-diffusion \eqref{eq-Vcrn-clejump}   in a neighborhood of the boundary $B_{1/(N+1)}(\{0,1\})$. All the in the numerical algorithm parameters are the same as above, except that the piecewise linear continuous functions $f$ approximating $\textbf{1}_{x\in B_{1/(N+1)}(0)\cup B_{1/(N+1)}(1)}$ are now:
\begin{align}\label{eq-f-bdry}
f(x)=\begin{cases}
1 & 0\leq x\leq \frac{1}{N+1} \\
2-(N+1)x & \frac{1}{N+1}\leq x < \frac{2}{N+1}\\
0 & \frac{2}{N+1} \leq x < 1-\frac{2}{N+1}\\
2+(N+1)(x-1) & 1-\frac{2}{N+1} \leq x < 1-\frac{1}{N+1}\\
1 & 1-\frac{1}{N+1} \leq x \leq 1.
\end{cases}\nonumber
\end{align}
An extra mesh point was used to account for the asymmetry of the neighborhood around the boundary points, which results in more numerically stable outputs. 

Figure \ref{group3} displays the results, and the derivatives near $\theta=0$ for this $\psi_{\theta}$ using $n=N=1000$ and centered finite differences at $\theta=\{-0.01,0,0.01\}$ are approximately:
\begin{align*}
&\psi_{0,JMP}' \approx 2.6\times10^{-1},\quad  \psi_{0,JMP}'' \approx 6.6\times10^{-3};\\
&\psi_{0,JDA}' \approx 1.8\times10^{-1},\quad  \psi_{0,JDA}''\approx 5.2\times10^{-2}.
\end{align*}
This confirms that the jump-diffusion approximation also accurately reflects the fact that the original process spends substantially more time reflecting at the boundaries than staying near its stable equilibria. In fact, since the noise in the model (after rescaling time) is not small, as the rescaling parameter $n$ increases the paths of the process in finite time are not converging closer to a deterministic path. Instead they exhibit fast passages through the interior of $(0,1)$ spending most of the time waiting for a reaction on the boundary $\{0,1\}$ to push it away from the  boundary  back into the interior. This fully matches the observations of the functional path behaviour of this model in \cite{McSP14}. Thm Prop 4.1 and Figure 3 in Sec 4.2. This type of behaviour has also been called `discreteness-induced transitions' when analyzed in related models (c.f. \cite{TK01, BDMcK14, BKW20}.

A comparison of the average long term local time near the boundaries versus the average time at  the stable equilibria then presents a quantifiable indicator for distinguishing the order of magnitude of division error intensity, and for retaining jumps in the reflected jump-diffusion approximation of the original model as crucial in the second case. Our example illustrates that numerical estimates of {local times} constitute valuable dynamic observables for reflected processes with drift of this form. They can be used to assess the closeness of approximating processes to the original model in the long-term (infinite time), as well as to distinguish the order of magnitude of an unbiased source of noise unseen by the drift of the process.

\begin{figure}[h]
	\begin{adjustwidth}{-1in}{-1in}
		\centering
	
		\subfloat[\centering $\psi_{\theta,JDA}^N$ plot for $n=100$, $0\leq\theta\leq1$ as  $\{N\}_{50}^{150}$ increases]{{\includegraphics[scale=0.4]{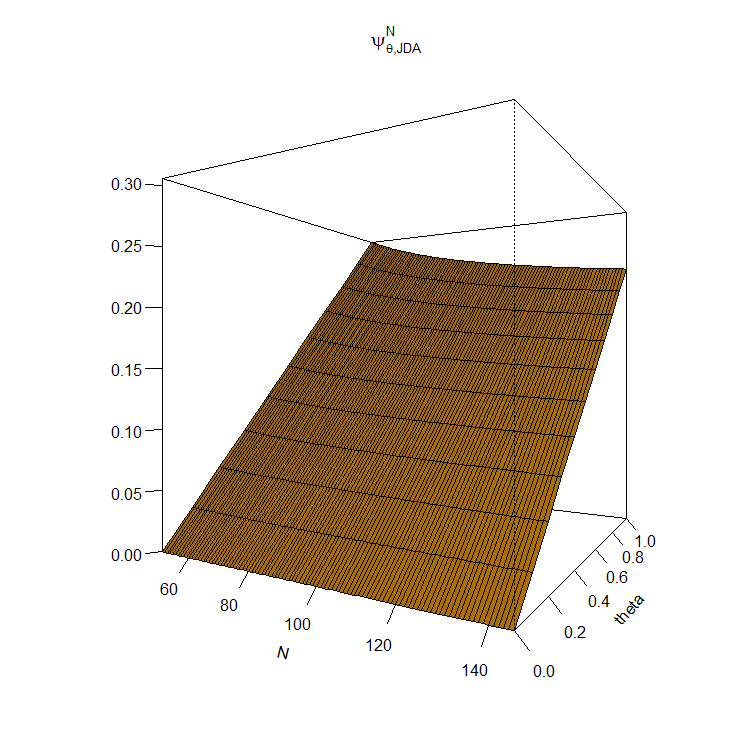} }}
		\quad
		\subfloat[\centering $\psi_{\theta,JDA}^N$ plot for $n=1000$, $0\leq\theta\leq1$ as $\{N\}_{500}^{1500}$ ]{{\includegraphics[scale=0.4]{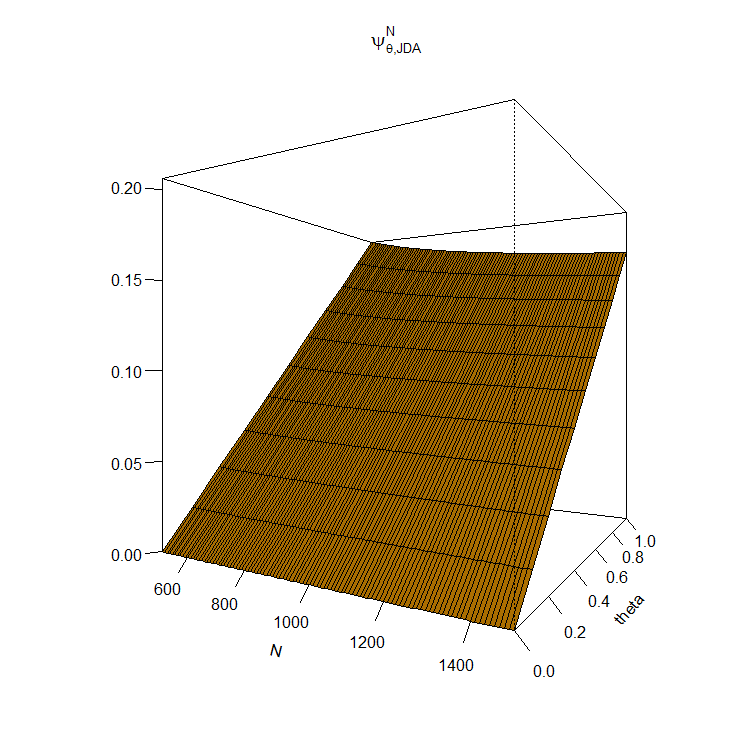} }}
		\quad
		\subfloat[\centering $\psi_{\theta,JMP}$ and $\psi_{\theta,JDA}^N$ plots for $n=N=100$, $0\leq\theta\leq100$.]{{\includegraphics[scale=0.3]{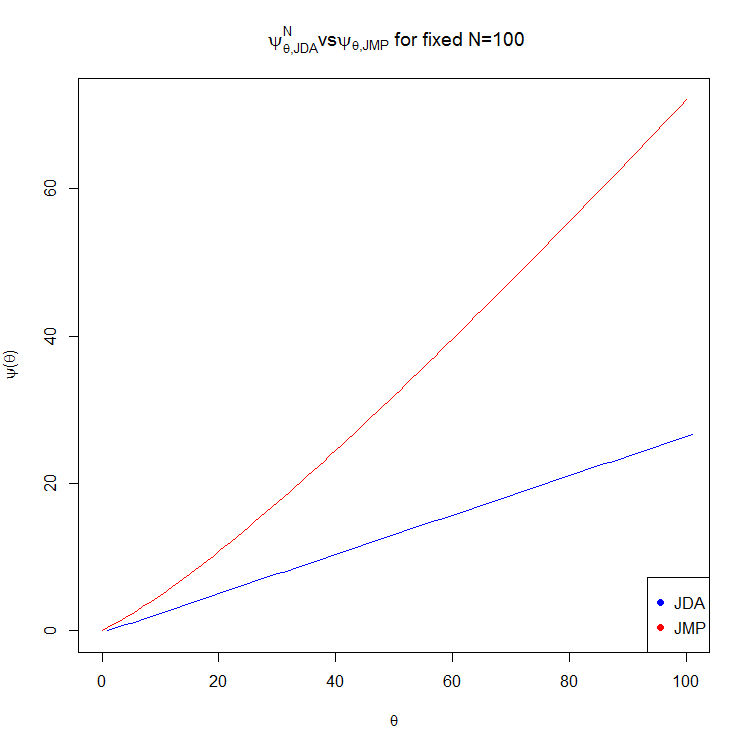} }}
		\quad
		\subfloat[\centering $\psi_{\theta,JMP}$ and $\psi_{\theta,JDA}^N$ plots for $n=N=1000$, $0\leq\theta\leq100$.]{{\includegraphics[scale=0.3]{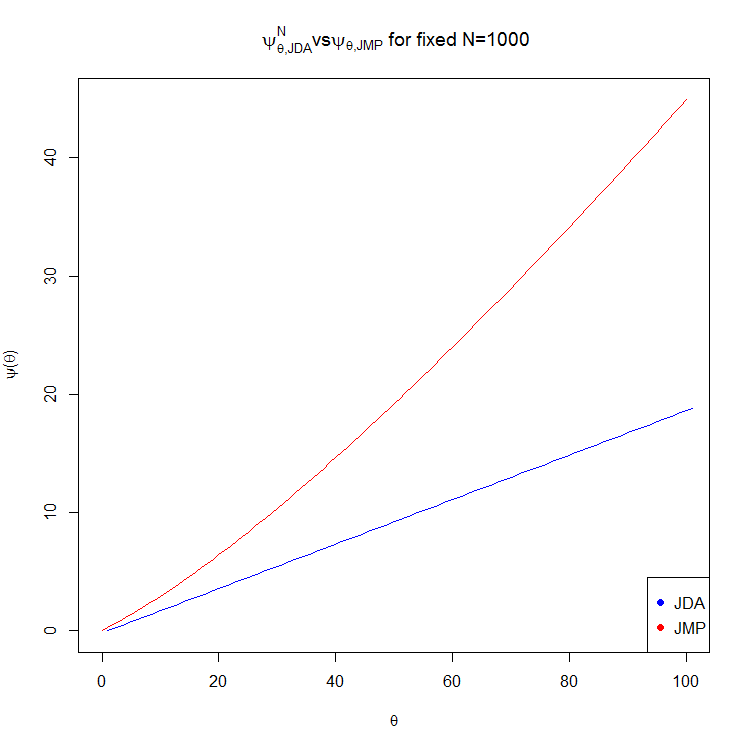} }}
		\caption{Numerical estimates of $\psi_{\theta}$ for long-term {local time} of $\{0,1\}$ of the jump Markov process $\psi_{\theta,JMP}$ and of the reflected jump-diffusion $\psi_{\theta,JDA}^N$ ($\gamma_n=10n^2$)}		
		\label{group3}
	\end{adjustwidth}
\end{figure}

A last comment on the stability of our numerical scheme in this example: we found that mesh sizes comparable to jump sizes in the process work well, while further refinements can lead to numerical instabilities.


%

\vspace{0.5cm}

{\bf Acknowledgments}. This work was supported by the Natural Sciences and Engineering Council of Canada (NSERC) Discovery Grant of the first author, and most of the research was conducted as part of the Masters thesis of the second author.  The authors would like to thank the anonymous readers whose questions have greatly improved the foundational basis for the paper and emphasized its relevance to our motivating application. 
\vspace{0.5cm}

\section{Appendix: Numerical method for the limiting logarithmic moment generating function}

Fix $\theta\in\mathbb{R}$. Let  $\tilde{\mathcal{L}}$ be the operator 
\[
\tilde{\mathcal{L}}u_\theta(x) = \mu(x)\partial_xu_\theta(x)+\frac{\sigma^2(x)}{2}\partial^2_{xx}u_\theta(x)+\theta f(x)u_{\theta}(x)+\int_{\mathcal{M}}[u_{\theta}(r(x,y))-u_{\theta}(x)]\nu_x(dy).
\]
defined on 
\[D(\tilde{\mathcal{L}})=\{u_\theta\in C^2([0,b]): \theta f(0) u_{\theta}(0)+ \rho_0\partial_xu_\theta(0)=0,\; \theta f(b) u_{\theta}(b)-\rho_b\partial_xu\theta(b)=0\}\]
We will numerically solve for the eigenvalue problem $\tilde{\mathcal{L}}u_\theta(x) =\psi_{\theta}u_{\theta}(x)$, $x\in [0,b]$ subject to the boundary conditions of $D(\tilde{\mathcal{L}})$, by replacing each derivative by an appropriate finite-difference quotient and the integral term by an appropriate sum.

Select an integer $N>0$ and divide the length of $[0,b]$ into $(N+1)$ equal subintervals whose endpoints are the mesh points $x_i=ih$, for $i=0,1,...,N+1$, where $h=\frac{b}{N+1}$. At the interior mesh points, $x_i$, for $i=1,2,...,N-1,N$, the PIDE to be approximated is $\tilde{\mathcal{L}}u_\theta(x_i) =\psi_{\theta}u_{\theta}(x_i)$.

Since $\nu_x(dy)$ may be discrete, continuous, or some combination of both, $\mathcal{M}$ as a series of disjoint continuous and discrete intervals. We call the continuous intervals $\mathcal{M}_{j_c}^c$ for $j_c=1,2,...,n_c$ and the discrete intervals $\mathcal{M}_{d_c}^d$ for $j_d=1,2,...,n_d$. Then we may rewrite the integral term as 
\begin{align*}
\int_{\mathcal{M}}[u_{\theta}(r(x,y))-u_{\theta}(x)]\nu_x(dy) =& \sum_{j_c}\int_{\mathcal{M}_{j_c}^c}[u_{\theta}(r(x,y))-u_{\theta}(x)]\nu_x(dy)
+ \sum_{j_d}\int_{\mathcal{M}_{j_d}^d}[u_{\theta}(r(x,y))-u_{\theta}(x)]\nu_x(dy). 
\end{align*} 
The discrete measure may be interpreted as
\begin{align*}
\sum_{j_d}\int_{\mathcal{M}_{j_d}^d}[u_{\theta}(r(x,y))-u_{\theta}(x)]\nu_x(dy) = \sum_{j_d}\sum_{y\in\mathcal{M}_{j_d}^d}[u_{\theta}(r(x,y))-u_{\theta}(x)]\nu_x(y).
\end{align*}
To approximate the continuous integral, we apply the Composite Trapezoidal rule over each interval $\mathcal{M}_{j_c}^c\equiv[a_{j_c},b_{j_c}]$. Define the integrand as $g^x(y)\equiv[u_{\theta}(r(x,y))-u_{\theta}(x)]\nu_x(y)$. For each $j_c$, select an integer $N_{j_c}>0$ and divide the length of $\mathcal{M}_{j_c}^c$, $(b_{j_c}-a_{j_c})$, into $N_{j_c}$ subintervals. So we have step size $h_{j_c}=\frac{b_{j_c}-a_{j_c}}{N_{j_c}}$ and $y_k^{j_c}=a_{j_c}+kh_{j_c}$ for each $k=0,1,...,N_{j_c}$. Then the first sum can be written as
\begin{align*}
\sum_{j_c}\int_{\mathcal{M}_{j_c}^c}g^x(y)dy =& \sum_{j_c}\left(\frac{h_{j_c}}{2}\left(g^x(a_{j_c}) + 2\sum_{j_c=1}^{N_{j_c}-1}g^x(y_k^{j_c})+g^x(b_{j_c})\right)-\frac{b_{j_c}-a_{j_c}}{12}h_{j_c}^2(g^x)''(\kappa_{j_c})\right)
\end{align*} 
for some $\kappa_{j_c}$ in $[a_{j_c},b_{j_c}]$.

We approximate the derivatives on $(0,b)$ by a centered-difference scheme where $\eta_i$ and $\xi_i$ are some values in $(x_{i-1},x_{i+1})$. Putting all the approximations together results in the finite difference equation
\begin{align*}
&\mu(x_i)\left(\frac{u(x_{i+1}) - u(x_{i-1})}{2h} - \frac{h^2}{6}u^{(3)}(\eta_i) \right) +\frac{\sigma^2(x_i)}{2}\left( \frac{u(x_{i+1}) -2u(x_i) + u(x_{i-1})}{h^2} - \frac{h^2}{12}u^{(4)}(\xi_i) \right) \\
&+\theta f(x_i)u(x_i)+ \sum_{j_c}\left(\frac{h_{j_c}}{2}\left(g^{x_i}(a_{j_c}) + 2\sum_{j_c=1}^{N_{j_c}-1}g^{x_i}(y_k^{j_c})+g^{x_i}(b_{j_c})\right)-\frac{b_{j_c}-a_{j_c}}{12}h_{j_c}^2(g^{x_i})''(\kappa_{j_c})\right)\\ 
&+\sum_{j_d}\sum_{y\in\mathcal{M}_{j_d}^d}[u(r(x_i,y))-u(x_i)]\nu_{x_i}(y) = \psi_{\theta}u(x_i).
\end{align*}

We choose forward and backward finite-difference schemes with $O(h^2)$ truncation error to approximate the boundary conditions:
\[
\frac{-\frac{3}{2}u_0 + 2u_1 -\frac{1}{2}u_2}{h} = -\frac{f(0)\theta u_0}{\rho_0} \iff  u_0 = \frac{4\rho_0 u_1-\rho_0 u_2}{3\rho_0-2\theta f(0) h},
\]
\[
\frac{\frac{3}{2}u_{N+1}-2u_N+\frac{1}{2}u_{N-1}}{h} = \frac{f(b)\theta u_{N+1}}{\rho_b} \iff u_{N+1} = \frac{\rho_b u_{N-1}-4\rho_b u_N}{2\theta f(b) h - 3\rho_b}.
\]

After truncating and rearranging together with the boundary conditions, we define the system of linear equations
\begin{align*}
&\left( -\frac{\mu(x_i)}{2h} +\frac{\sigma^2(x_i)}{2h^2} \right)u_{i-1} + \left(-\frac{\sigma^2(x_i)}{h^2} + \theta f(x_i) \right)u_i + \left( \frac{\mu(x_{i})}{2h} + \frac{\sigma^2(x_i)}{2h^2} \right)u_{i+1}+ \sum_{l=0}^{N+1} \tilde{g}^{u_{l}}(x_i)u_l=\psi_{\theta}u_i,
\end{align*}
where each function $\tilde{g}^{u_{l}}(x_i)$ represents the sum of all terms in the integral approximations that are factors of $u_l$, for each $i=1,2,...,N$. Every time $u_0$ or $u_{N+1}$ is a term in the equation, we replace it with the appropriate boundary condition. This allows us to define the system of equations as an $N\times N$ matrix, with $a_1(x_i) \equiv  -\frac{\mu(x_i)}{2h} +\frac{\sigma^2(x_i)}{2h^2} $, $a_2(x_i)\equiv-\frac{\sigma^2(x_i)}{h^2} + \theta f(x_i)$, and $a_3(x_i)\equiv \frac{\mu(x_{i})}{2h} + \frac{\sigma^2(x_i)}{2h^2}$, as
\[
(\textbf{A}+\textbf{G})\textbf{u}=\psi_{\theta}\textbf{u},
\]
where
\[	
\textbf{A} = 
\begin{bmatrix}
\frac{4\rho_0a_1(x_1)}{3\rho_0-2\theta f(0) h} + a_2(x_1)   &  -\frac{\rho_0 a_1(x_1)}{3\rho_0-2\theta f(0) h}+a_3(x_1)  & 0 & \cdots & 0 & 0\\
a_1(x_2)  & a_2(x_2)  &  a_3(x_2)  & \cdots & 0 & 0\\
\ddots & \ddots & \ddots & \ddots & \ddots & \ddots\\
0 & \cdots & 0 & a_1(x_{N-1}) & a_2(x_{N-1})  &  a_3(x_{N-1})\\
0 & \cdots & 0 & 0 & a_1(x_N) + \frac{\rho_b a_3(x_N)}{2\theta f(b) h - 3\rho_b} & a_2(x_N) - \frac{4\rho_b a_3(x_N)}{2\theta f(b) h - 3\rho_b} \\
\end{bmatrix}
\text{,}
\]
\[\textbf{G} = 
\begin{bmatrix}
\tilde{g}^{u_{1}}(x_1) & \tilde{g}^{u_{2}}(x_1) & \tilde{g}^{u_{3}}(x_1) &\cdots & \tilde{g}^{u_{N}}(x_1)\\
\tilde{g}^{u_{1}}(x_2) & \tilde{g}^{u_{2}}(x_2) & \tilde{g}^{u_{3}}(x_2) & \cdots &\tilde{g}^{u_{N}}(x_2)\\
\ddots & \ddots & \ddots & \ddots & \ddots\\
\tilde{g}^{u_{1}}(x_N) & \tilde{g}^{u_{2}}(x_N) & \tilde{g}^{u_{3}}(x_N) & \cdots &\tilde{g}^{u_{N}}(x_N)\\
\end{bmatrix}
\text{, and }
\textbf{u} = 
\begin{bmatrix}
u_1\\u_2\\\vdots\\u_N
\end{bmatrix}
\text{.}
\]
Finally, we compute $\psi_{\theta}$ by solving for the eigenvalues of \textbf{A}+\textbf{G} and selecting the one with largest value, then its associated eigenvector is the solution $\textbf{u}$ to the PIDE.\\

\end{document}